\newtheorem{thm}{Theorem}[section]
\newtheorem{lem}{Lemma}[section]
\newtheorem{prop}{Proposition}[section]
\makeatletter \@addtoreset{equation}{section}
\newcommand{\Z}{\mathbb{Z}}
\newcommand{\Q}{\mathbb{Q}}
\newcommand{\C}{\mathbb{C}}
\def\H{\mathbb{H}}
\begin{document}
\begin{center}
{{\Large\bf The number of cubic partitions modulo powers of 5}}

\vskip 6mm

\end{center}

\begin{center}{Xinhua, Xiong\\
             Department of Mathematics, China Three Gorges University, Yichang 443002,
P.R. China \\
xinhuaxiong@ctgu.edu.cn}          
\end{center}

\date{}

\begin{abstract}
The notion of cubic partitions is introduced by
Hei-Chi Chan and named by Byungchan Kim in connection with Ramanujan's cubic continued fractions.
Chan proved that cubic partition function has Ramanujan Type congruences modulo powers of $3$.
In a recent paper, William Y.C. Chen and Bernard L.S. Lin studied the congruent property of the cubic partition function
modulo $5$. In this note, we give Ramanujan type congruences for cubic partition function modulo powers of $5$.
\end{abstract}

\section{Introduction}\label{intro}
Let $p(n)$ denote the number of the unrestricted partitions of $n$, Ramanujan  discovered and
later proved that for every non negative integer $n$, we have:
\begin{eqnarray} \label{ramanujan}
p(5n+4)&\equiv 0\pmod 5,\cr
p(7n+5)&\equiv 0\pmod 7,\cr
p(11n+6)&\equiv 0\pmod{11}.
\end{eqnarray}
Much more is known than (\ref{ramanujan}). In fact, for every integer $\alpha \ge 1$ and every non negative integer $n$:
\begin{equation} \label{r1}
p(5^{\alpha}n + \delta_{5,\alpha}) \equiv 0 \pmod{5^{\alpha}},
\end{equation}
\begin{equation} \label{r2}
p(7^{\alpha}n + \delta_{7,\alpha})\equiv 0 \pmod{7^{[\frac{\alpha +2}{2}]}},
\end{equation}
\begin{equation} \label{r3}
p(11^{\alpha}n + \delta_{11,\alpha}) \equiv 0 \pmod{11^{\alpha}}.
\end{equation}
Here $\delta_{t,\alpha}$ is the reciprocal modulo $t^{\alpha}$ of $24$.
(\ref{r1}) and (\ref{r2}) were first proved by G.N. Watson in 1938, see \cite{Watson38}. Hirschhorn and Hunt \cite{Hirschhorn}
gave an elementary proof of (\ref{r1}). Garvan \cite{Garvan} gave an elementary of (\ref{r2}). (\ref{r3})
was proved by A.O.L. Atkin \cite{Atkin} in 1967. After that, the generalizations of (\ref{r1}), (\ref{r2}) and
(\ref{r3}) for other partition functions have been investigated by many mathematicians. Let $q(n)$
denote the number of partitions of $n$ into distinct parts, Gordon and Hughes \cite{Gordon84} obtained some congruences for
$q(n)$ modulo powers $5$. They proved that for all integers $\alpha \ge 0$ and $n \geq 0$
\begin{equation*} \label{r_01(0)}
q(5^{2\alpha +1}n + \delta_{\alpha}) \equiv 0 \pmod{5^{\alpha}}.
\end{equation*}
Where $\delta_{\alpha}$ is the reciprocal of $-24$ modulo $5^{2\alpha +1}$. Recently, motivated by the
study of Ramanujan's cubic continued fraction, Hei-Chi Chan \cite{Chan08a} \cite{Chan08b} and Byungchan Kim \cite{Kim08} introduced the notion of cubic partition of nonnegative integers. By definition, the generating function of the number of cubic partitions of $n$ is
\begin{equation}\label{Defa}
\sum_{n=0}^\infty a(n)q^n=\prod_{n=1}^{\infty}\frac{1}{(1-q^{n})(1-q^{2n})}.
\end{equation}
From an elegant identity on the Ramanujan's cubic continued fraction, Chan established the
generating function of $a(3n+2)$:
\begin{equation}\label{Defa}
\sum_{n=0}^\infty a(3n+2)q^n=3\prod_{n=1}^{\infty}\frac{(1-q^{3n})^3(1-q^{6n})^3}{(1-q^{n})^4(1-q^{2n})^4},
\end{equation}
which led the following congruence
$$
a(3n+2)\equiv 0 \pmod{3}.
$$
Moreover, He obtained the following Ramanujan type congruences for $a(n)$ similar to (\ref{r1}) (\ref{r2})
(\ref{r3}):
\begin{thm}\label{thm1.1}
For $\alpha\geq 1$,
\begin{equation}\label{genconga}
a(3^{\alpha}n+c_{\alpha})\equiv  0\ ({\rm mod}\ 3^{\alpha+\delta(\alpha)}),
\end{equation}
where $c_{\alpha}$ is the reciprocal modulo $3^{\alpha}$ of $8$, and $\delta(\alpha)=1$
if $\alpha$ is even and $\delta(\alpha)=0$ if $\alpha$ is odd.
\end{thm}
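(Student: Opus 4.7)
The plan is to argue by induction on $\alpha$, adapting the classical Watson method that handles Ramanujan's congruences for $p(n)$ modulo powers of $5$ and $7$. The base case $\alpha=1$ is immediate from Chan's identity displayed in the introduction,
\begin{equation*}
\sum_{n\geq 0}a(3n+2)q^n=3\prod_{n\geq 1}\frac{(1-q^{3n})^3(1-q^{6n})^3}{(1-q^n)^4(1-q^{2n})^4},
\end{equation*}
which exhibits an explicit factor of $3$ in front of a power series with integer coefficients; since $c_1=2$ and $\delta(1)=0$, this is exactly $a(3n+c_1)\equiv 0\pmod{3^{1+\delta(1)}}$.

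For the inductive step, set $\Phi_\alpha(q)=\sum_{n\geq 0}a(3^\alpha n+c_\alpha)q^n$. Passing from $\Phi_\alpha$ to $\Phi_{\alpha+1}$ amounts to extracting a fixed residue class modulo $3$ from $\Phi_\alpha$, which is governed by Atkin's $U_3$-operator $U_3(\sum b_n q^n)=\sum b_{3n}q^n$. After multiplying by the generating product $1/((q;q)_\infty(q^2;q^2)_\infty)$ (itself an eta quotient of level $2$), the resulting relation takes the shape $\Phi_{\alpha+1}(q)=U_3\bigl(E(q)\,\Phi_\alpha(q)\bigr)$ for an explicit eta quotient $E(q)$; the natural level of these eta quotients is $6$, reflecting the presence of both $q$ and $q^2$ in the defining product for $a(n)$.

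The substantive step is to identify a Hauptmodul $t(q)$ for $\Gamma_0(6)$, expressible as a concrete eta quotient, and to prove that the operator $L\colon f\mapsto U_3(E\cdot f)$ preserves the $\Z$-module $V$ spanned by $\{t^k:k\geq 1\}$. The key lemma would assert that, in this basis, the matrix of $L$ has entries $m_{jk}$ whose $3$-adic valuations grow linearly in $k$, with a parity-dependent sharpening that yields the extra power of $3$ when $\alpha$ is even. Writing $\Phi_\alpha=\sum_k x_k^{(\alpha)}t^k$, the recursion $x^{(\alpha+1)}=M\,x^{(\alpha)}$ together with these valuation estimates then gives by induction the divisibility of every $x_k^{(\alpha)}$, and hence of the coefficients of $\Phi_\alpha$, by $3^{\alpha+\delta(\alpha)}$.

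The main obstacle will be the explicit eta-quotient manipulation: choosing $t$ and $E$ correctly, computing $U_3(E\cdot t^k)$ for small $k$ via $3$-dissection identities for the Dedekind eta function, and then verifying the alternating $3$-adic estimate responsible for the $+\delta(\alpha)$ correction. This alternating gain is familiar from Atkin's treatment of $p(11^\alpha n+\delta_{11,\alpha})$ and from Gordon--Hughes' work on $q(n)$ modulo $5^\alpha$ recalled above, but the specific level $6$ here forces a case-by-case re-examination of the relevant eta-quotient identities. Once the core matrix lemma is in place, the inductive passage from $\alpha$ to $\alpha+1$ is routine.
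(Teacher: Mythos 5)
First, note that the paper does not actually prove Theorem \ref{thm1.1}: it is quoted as Chan's result (\cite{Chan08a}, \cite{Chan08b}), and the body of the paper only proves the analogous statements modulo powers of $5$ (Theorems \ref{thm1.2} and \ref{thm1.4}). So there is no internal proof to compare against; your proposal can only be judged against the method the paper uses for its own main theorem, which is indeed the Watson/Gordon--Hughes machinery you describe.

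Measured that way, your outline has the right shape but a genuine gap: the entire mathematical content of the theorem is concentrated in the ``key lemma'' on the $3$-adic valuations of the matrix entries $m_{jk}$, and you neither state it precisely nor prove it. In the paper's own argument for powers of $5$, the analogues are Lemmas \ref{lem4.1} and \ref{lem4.2}, which require (i) an explicit Newton recurrence for $U(A^i)$ derived from the elementary symmetric functions $\sigma_1,\dots,\sigma_5$ of $A\!\left(\frac{z+t}{5}\right)^{-1}$, (ii) finitely many computed initial values to anchor the induction, and (iii) a careful choice of the exponents $\left[\frac{3j-i}{2}\right]$ and $\left[\frac{5j-i-1}{6}\right]$ so that each of the fifteen terms in the recurrence respects the bound. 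None of that is routine, and without it the induction on $\alpha$ has nothing to run on. A second, structural problem: you posit a single recursion $\Phi_{\alpha+1}=U_3(E\,\Phi_\alpha)$ with one fixed eta quotient $E$. A single operator iterated $\alpha$ times yields a uniform gain per step, hence a bound of the form $3^{c\alpha}$; it cannot by itself produce the parity-dependent exponent $\alpha+\delta(\alpha)$. In all the treatments you cite (Watson, Atkin, Gordon--Hughes) and in this paper's Section \ref{sec4}, the recursion alternates between two different operators, $f\mapsto U(f)$ and $f\mapsto U(Ff)$ (giving two matrices $a$ and $b$ with different valuation growth), and it is exactly the asymmetry between the two half-steps that produces the extra power on even steps. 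Your plan needs to be reorganized around that two-step alternation before the $\delta(\alpha)$ correction can even be formulated.
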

In a recent paper \cite{Chen and Lin 09}, William Y.C. Chen and Bernard L.S. Lin studied the congruence of $a(n)$
modulo $5$, they obtained that
\begin{thm}\label{thm1.2} For every nonnegative integer $n$, we have
\begin{eqnarray}\label{mod5}
a(25n+22)&\equiv& 0\ ({\rm mod}\ 5).
\end{eqnarray}
\end{thm}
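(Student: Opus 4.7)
The plan is to imitate the classical Ramanujan-style argument for $p(5n+4)\equiv 0\pmod 5$ and iterate a $5$-dissection of the generating function modulo $5$ until the arithmetic progression $25n+22$ is isolated. The starting point is the elementary congruence $(1-q)^5\equiv 1-q^5\pmod 5$, which gives $(q;q)_\infty^5\equiv (q^5;q^5)_\infty\pmod 5$ and similarly $(q^2;q^2)_\infty^5\equiv (q^{10};q^{10})_\infty\pmod 5$. Writing $1/f\equiv f^4/f^5$, the generating function becomes
$$
\sum_{n\geq 0}a(n)q^n \;\equiv\; \frac{(q;q)_\infty^4\,(q^2;q^2)_\infty^4}{(q^5;q^5)_\infty\,(q^{10};q^{10})_\infty}\pmod 5,
$$
whose denominator is a series in $q^5$, so the $5$-dissection modulo $5$ is completely controlled by that of the numerator.

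I would then apply Ramanujan's $5$-dissection of $(q;q)_\infty$ in terms of $(q^{25};q^{25})_\infty$ and Rogers--Ramanujan type factors in $q^5$, together with its image under $q\mapsto q^2$. Raising both to the fourth power, multiplying out, and collecting terms according to the residue of the exponent modulo $5$ gives an explicit $5$-dissection of $(q;q)_\infty^4(q^2;q^2)_\infty^4$. Extracting the terms of exponent $\equiv 2\pmod 5$, dividing by $q^2$, and substituting $q^5\mapsto q$, I obtain an infinite-product expression for $\sum_{n\geq 0}a(5n+2)q^n\pmod 5$. Iterating once more---rewriting this new series modulo $5$ so its denominator is again a series in $q^5$, $5$-dissecting, and extracting the arithmetic progression $n\equiv 4\pmod 5$ (since $5\cdot 4+2=22$)---produces a formula for $\sum_{n\geq 0}a(25n+22)q^n\pmod 5$, which one finally verifies is identically $0\pmod 5$, ideally by exhibiting an overall factor of $5$, in the spirit of Ramanujan's identity $\sum p(5n+4)q^n = 5\,(q^5;q^5)_\infty^5/(q;q)_\infty^6$.

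The main obstacle will be the bookkeeping in the intermediate $5$-dissections: fourth powers of Ramanujan's dissection produce many cross terms, and tracking which ones fall into each residue class modulo $5$---and then reassembling the survivors into a form clean enough for the next iteration---is the technical heart of the argument. A parallel approach I would try is to interpret $\prod(1-q^n)^{-1}(1-q^{2n})^{-1}$ as the Fourier expansion (up to a rational power of $q$) of a weakly holomorphic modular form on $\Gamma_0(2)$ with character; the passages $a(n)\mapsto a(5n+2)$ and $a(5n+2)\mapsto a(25n+22)$ are then two applications of a $U_5$-type operator, and the congruence reduces to a finite Sturm-type check on a small-dimensional space. Either route collapses to a finite algebraic verification, but the elementary dissection approach is the one most consistent with the cited work of Chan and of Chen--Lin, and is what I would write out in full detail.
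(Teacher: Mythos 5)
Your overall strategy --- rewrite the generating function modulo $5$ so that the denominator is a series in $q^5$, dissect twice, and look for an overall factor of $5$ only after the second extraction --- is sound, and your ``parallel approach'' via a $U_5$-type operator is in fact essentially the route the paper takes. The arithmetic of the progressions is also correct: the factor of $5$ genuinely does not appear after one step (the paper's first extraction yields $\sum_{n\ge 1}a(5n-3)q^n\cdot\prod_{n\ge1}(1-q^{5n})(1-q^{10n}) = 3A+25A^2+125A^3$, with leading coefficient $3$), and it only materializes after the second application of $U_5$, exactly where you predict it.

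As written, however, the proposal is a plan rather than a proof: everything that actually forces the congruence is deferred to ``bookkeeping'' and ``a finite algebraic verification'' that you never carry out. Concretely, the two facts you would need and do not establish are (i) an exact closed form for the first extraction --- the paper proves $U_5(F)=3A+25A^2+125A^3$, where $F=\eta(25z)\eta(50z)/(\eta(z)\eta(2z))$ and $A=\eta^2(5z)\eta^2(10z)/(\eta^2(z)\eta^2(2z))$ --- and (ii) the divisibility $U_5(A)\equiv 0\pmod 5$ (the paper computes $U_5(A)=35A+25R(A)$, and shows $U_5(A^2),U_5(A^3)\in\Z[A]$), from which $U_5(U_5(F))\equiv 0\pmod 5$ follows at once. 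The missing idea that makes these verifications \emph{finite} is the genus-zero structure of $X_0(10)$: $A$ is a Hauptmodul, and cusp-order estimates (Ligozat's formula together with the Gordon--Hughes bounds for $U_5$) show that $U_5(F)$ and $U_5(A^i)$ are polynomials in $A$ of explicitly bounded degree, so that only finitely many Fourier coefficients need to be compared. Your elementary route via fourth powers of Ramanujan's $5$-dissection of $(q;q)_\infty$ and $(q^2;q^2)_\infty$ could in principle substitute for this, but it requires the corresponding explicit identities to be worked out twice over, and that computation --- the entire content of the proof --- is precisely what has been omitted. Until one of these verifications is actually performed (or replaced by a genuine Sturm-bound check, which is the Chen--Lin proof the paper is deliberately avoiding), the congruence has not been proved.
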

The aim of this note is the generalization of Theorem \ref{thm1.2} to the congruences of $a(n)$ modulo
all powers of $5$, which is analogues to Ramanujan type congruences for $a(n)$ modulo powers of $3$.
The main result of the paper is the following theorem:
\begin{thm}\label{thm1.4}
If both $\alpha$ and $n$ are non negative integers, then
\begin{equation}
a(5^{2\alpha+2}n+\delta_{\alpha})\equiv  0\ ({\rm mod}\ 5^{\alpha+1}),
\end{equation}
where $\delta_{\alpha}$ is the reciprocal modulo $5^{2\alpha+2}$ of $8$.
\end{thm}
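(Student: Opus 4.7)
The plan is to adapt Watson's classical proof of Ramanujan's congruences $p(5^\alpha n+\delta_{5,\alpha})\equiv 0\pmod{5^\alpha}$, in the elementary form due to Hirschhorn and Hunt \cite{Hirschhorn}, to the cubic partition generating function (\ref{Defa}). Since that function is a modular function on $\Gamma_0(10)$ rather than $\Gamma_0(5)$, the cusps at $\infty$ and at $1/2$ must be tracked simultaneously, and each extra factor of $5$ in the modulus is paid for by two applications of the $U_5$ operator; this is consistent with the doubled exponent $5^{2\alpha+2}$ appearing in the theorem.

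Introduce the auxiliary series
\begin{equation*}
L_k(q):=\sum_{n\ge 0} a(5^{k}n+\lambda_k)\,q^n,
\end{equation*}
where $\lambda_k$ is the reciprocal of $8$ modulo $5^{k}$ (so $\lambda_{2\alpha+2}=\delta_\alpha$), reducing Theorem \ref{thm1.4} to the assertion $L_{2\alpha+2}\equiv 0\pmod{5^{\alpha+1}}$, with base case $\alpha=0$ provided by Theorem \ref{thm1.2}. Next, pick a Hauptmodul $\xi$ for $\Gamma_0(10)$ as an eta-quotient in the four factors $(q^d;q^d)_\infty$ with $d\mid 10$, together with two auxiliary multiplier eta-quotients $\Phi_0,\Phi_1$ chosen so that
\begin{equation*}
\Phi_0(q)\,L_{2\alpha}(q)\in\mathbb{Z}[\xi],\qquad \Phi_1(q)\,L_{2\alpha+1}(q)\in\mathbb{Z}[\xi];
\end{equation*}
the induction on $\alpha$ is then carried out on the coefficient sequences of these polynomials in $\xi$.

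The inductive step rests on two \emph{$U_5$-modular equations}
\begin{equation*}
U_5(\Phi_0\,\xi^k)=\Phi_1\sum_{j}\alpha_{k,j}\,\xi^j,\qquad U_5(\Phi_1\,\xi^k)=\Phi_0\sum_{j}\beta_{k,j}\,\xi^j,
\end{equation*}
with explicit integer matrices $(\alpha_{k,j}),(\beta_{k,j})$ whose entries satisfy $5$-adic lower bounds that cumulatively yield one extra factor of $5$ per composition $U_5\circ U_5$; iteration then carries $L_{2\alpha}$ to $L_{2\alpha+2}$ with exactly the needed gain. The hard part will be deriving these modular equations explicitly at level $10$: at level $5$ one uses the classical identity for $U_5$ on powers of Ramanujan's Hauptmodul $t=q\prod_{n\ge 1}\bigl((1-q^{5n})/(1-q^n)\bigr)^6$, but at level $10$ the additional $2$-cusp structure forces a simultaneous treatment of $\Phi_0$ and $\Phi_1$, obtained by dissecting $\bigl((q;q)_\infty(q^2;q^2)_\infty\bigr)^{-1}$ modulo $5$ and verifying the resulting identity within the finite-dimensional space of modular functions on $\Gamma_0(10)$ with prescribed pole orders. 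The accompanying $5$-adic bookkeeping required to pin down the valuations of the $\alpha_{k,j},\beta_{k,j}$ is the main technical burden, exactly analogous to the numerical computation performed by Hirschhorn and Hunt at level $5$.
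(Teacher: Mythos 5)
Your outline is, in structure, the paper's own proof (which follows Gordon--Hughes rather than Hirschhorn--Hunt, though the mechanism is the same Watson-style induction): your Hauptmodul $\xi$ is $A=\eta^2(5z)\eta^2(10z)/\bigl(\eta^2(z)\eta^2(2z)\bigr)$; the role of your pair $\Phi_0,\Phi_1$ is played by the single level-$50$ eta-quotient $F=\eta(25z)\eta(50z)/\bigl(\eta(z)\eta(2z)\bigr)$ together with the alternation $W_{2\alpha+1}=U_5(W_{2\alpha}F)$, $W_{2\alpha+2}=U_5(W_{2\alpha+1})$; and your two $U_5$-modular equations are $U_5(A^i)=\sum_j a_{ij}A^j$ and $U_5(FA^i)=\sum_j b_{ij}A^j$ with $a_{ij},b_{ij}\in\Z$, obtained from Newton's identities for the five conjugates $A\bigl(\frac{z+t}{5}\bigr)$. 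So the route is the right one, and the reduction to $L_{2\alpha+2}\equiv 0 \pmod{5^{\alpha+1}}$ with base case Theorem \ref{thm1.2} is correct.

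The gap is that everything which actually proves the theorem is deferred. The assertion that the matrix entries ``satisfy $5$-adic lower bounds that cumulatively yield one extra factor of $5$ per composition $U_5\circ U_5$'' is precisely what must be discovered and proved; it is not automatic, and you give no candidate bounds and no mechanism. In the paper the bounds are $\pi(a_{ij})\geq\left[\frac{3j-i}{2}\right]$ and $\pi(b_{ij})\geq\left[\frac{5j-i-1}{6}\right]$ (Lemmas \ref{lem4.1} and \ref{lem4.2}), proved by induction on $i$ from the explicit five-term Newton recurrence with coefficients $\sigma_1,\dots,\sigma_5\in\Z[A]$, which must first be computed; one then needs the separate combinatorial verification (Lemma \ref{lemma4.3}) that these bounds propagate through $w_j^{(k+1)}=\sum_i w_i^{(k)}a_{ij}$ (resp.\ $b_{ij}$) to give $\pi\bigl(w_j^{(2\alpha+1)}\bigr)\geq\alpha+\left[\frac{j}{2}\right]$ and $\pi\bigl(w_j^{(2\alpha+2)}\bigr)\geq\alpha+1+\left[\frac{j-1}{2}\right]$. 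Note that a single application of $U_5$ does \emph{not} gain a uniform factor of $5$ (e.g.\ $U_5(A^2)=56A+\cdots$, and $56$ is prime to $5$); the gain per double application only materializes because the valuation bounds grow with $j$ and the vectors $W_k$ have no constant term, facts your sketch neither states nor uses. Until the modular equations are exhibited, the valuation estimates stated precisely, and the induction closed, the proposal is a correct plan but not a proof.
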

The paper is organized as follows. In Section 2 we recall some properties of modular forms, especially on
Eta-products. In Section 3, we give a different proof of Theorem \ref{thm1.2}. The proof of Theorem \ref{thm1.4} is given in Section 3. the appendix contains some data obtained by Wolfram Mathematica 6.
\section{Preliminaries}\label{preli}
Let $\mathbb{H}:=\{z\in\mathbb{C}|{\rm Im}(z)>0\}$ denote the upper half of the complex plane, and $SL_2(\mathbb{Z})$
be the full modular group, for a positive integer $N$, we define the subgroup $\Gamma_0(N)$ of $SL_2(\mathbb{Z})$ as follows:
$$
\Gamma_0(N):=\left\{\left(
                 \begin{array}{cc}
                   a & b \\
                   c & d \\
                 \end{array}
               \right)\Big|c\equiv 0\ ({\rm mod}\ N)
\right\}.
$$
Any $\gamma=\left(
                 \begin{array}{cc}
                   a & b \\
                   c & d \\
                 \end{array}
               \right)\in SL_2(\mathbb{Z})$ acts on the upper
half  of complex plane by the linear
fractional transformation
$$
\gamma z:=\frac{az+b}{cz+d}.
$$
Let $f(z)$ be a function on $\H$  satisfying
$
f(\gamma z)=f(z),
$
if $f(z)$ is meromorphic on $\H$ and at  all the cusps of $\Gamma_0(N)$, then we call $f(z)$
a meromorphic modular function with respect to $\Gamma_0(N)$. The set of all such functions is denoted by
$\mathcal{M}_0(\Gamma_0(N))$.
Dedekind's eta function is defined by
$$
\eta(z):=q^{\frac{1}{24}}\prod_{n=1}^\infty (1-q^n),
$$
where $q=e^{2\pi iz}$ and ${\rm Im}(z)>0$. It is well-known that
$\eta(z)$ is holomorphic and does not vanish on $\mathbb{H}$.

A function $f(z)$ is called an eta-product if it can
be written in the form of
$$
f(z)=\prod_{\delta |N}\eta^{r_{\delta}}(\delta z),
$$
where $N$ and $\delta$ are  natural numbers and  $r_{\delta}$ is an integer. The following
fact which is  due to Gordon-Hughes \cite{Gordon84} and Newman \cite{Newman} is useful  to verify  whether an eta-product is a
modular function.
\begin{prop}[\cite{cmbs}]\label{prop2.1}
If $f(z)=\prod_{\delta|N}\eta^{r_{\delta}}(\delta
z)$ is an eta-product with
$$
k=\frac{1}{2}\sum_{\delta|N}r_{\delta}\in \mathbb{Z},
$$
satisfying the following conditions:
\begin{equation}\label{con1}
\sum_{\delta|N}\delta r_{\delta}\equiv 0 \ ({\rm mod}\ 24)
\end{equation}
and
\begin{equation}\label{con2}
\sum_{\delta|N}\frac{N}{\delta} r_{\delta}\equiv 0 \ ({\rm mod}\
24),
\end{equation}
then $f(z)$ satisfies
\begin{equation}\label{relation1}
f\left(\frac{az+b}{cz+d}\right)=\chi(d)(cz+d)^kf(z)
\end{equation}
for each $\left(
           \begin{array}{cc}
             a & b \\
             c & d \\
           \end{array}
         \right)\in \Gamma_0(N)$.
          Here the character $\chi$ is
         defined by $\chi(d):=\left(\frac{(-1)^ks}{d}\right)$, where
         \[ s:=\prod_{\delta|N}\delta^{r_{\delta}}\] and
         $\left(\frac{m}{n}\right)$ is Kronecker symbol.
\end{prop}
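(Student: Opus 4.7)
The plan is to bootstrap the assertion from the classical transformation law of the single Dedekind eta function on $SL_2(\mathbb{Z})$, using a matrix identity that converts the action of $\Gamma_0(N)$ on each factor $\eta(\delta z)$ into the action of a companion element of $SL_2(\mathbb{Z})$ on $\eta$ itself. First I would record the key bookkeeping identity: for any $\gamma=\begin{pmatrix}a&b\\c&d\end{pmatrix}\in\Gamma_0(N)$ and any divisor $\delta\mid N$,
$$\delta\cdot\gamma z \;=\; \gamma_\delta(\delta z), \qquad \gamma_\delta:=\begin{pmatrix} a & b\delta \\ c/\delta & d\end{pmatrix}\in SL_2(\mathbb{Z}),$$
where the lower-left entry is an integer because $\delta\mid N\mid c$, and $\det\gamma_\delta=ad-bc=1$.

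Next I would invoke the classical Rademacher form of the Dedekind transformation: for any $\gamma'=\begin{pmatrix}A&B\\C&D\end{pmatrix}\in SL_2(\mathbb{Z})$ with $C>0$,
$$\eta(\gamma' w) \;=\; \varepsilon(\gamma')\sqrt{Cw+D}\,\eta(w),$$
where $\varepsilon(\gamma')$ is the explicit $24$th root of unity determined by a Dedekind sum $s(D,C)$ plus the rational term $(A+D)/(12C)$. Setting $\gamma'=\gamma_\delta$ and $w=\delta z$ gives $\eta(\delta\gamma z)=\varepsilon(\gamma_\delta)\sqrt{cz+d}\,\eta(\delta z)$. Raising to the exponent $r_\delta$ and taking the product over $\delta\mid N$ yields
$$f(\gamma z) \;=\; (cz+d)^{k}\,M(\gamma)\,f(z), \qquad M(\gamma):=\prod_{\delta\mid N}\varepsilon(\gamma_\delta)^{r_\delta},$$
with $k=\tfrac12\sum_\delta r_\delta\in\mathbb{Z}$ by hypothesis.

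The main obstacle is to show $M(\gamma)=\chi(d)=\left(\frac{(-1)^ks}{d}\right)$, where $s=\prod_\delta\delta^{r_\delta}$. I would first dispense with $c=0$: then $\gamma=\pm T^b$, only the translation rule $\eta(z+1)=e^{\pi i/12}\eta(z)$ appears, and condition \eqref{con1}, $\sum_\delta\delta r_\delta\equiv 0\pmod{24}$, immediately gives $f(z+b)=f(z)$. For $c\neq 0$ I would expand each $\varepsilon(\gamma_\delta)$ and gather phases into a Dedekind-sum part $\sum_\delta r_\delta\, s(d,c/\delta)$ and a rational remainder in $a,d,c,\delta$. Applying Dedekind reciprocity converts each $s(d,c/\delta)$ into $s(c/\delta,d)$ plus explicit rational corrections, and those corrections, weighted by $r_\delta$, telescope into multiples of $\sum_\delta\delta r_\delta$ and $\sum_\delta (N/\delta) r_\delta$, both annihilated modulo $24$ by hypotheses \eqref{con1} and \eqref{con2}. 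What survives is a Jacobi-symbol expression in $d$ which, once the branch signs are fixed by the $(-1)^k$ coming from the square-root factors, is exactly the Kronecker symbol $\left(\frac{(-1)^ks}{d}\right)$.

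Rather than grind through this reciprocity bookkeeping, I would cite Ono's CBMS monograph \cite{cmbs}, which executes precisely this calculation in detail, codifying the earlier results of Newman \cite{Newman} and Gordon-Hughes \cite{Gordon84}. The two congruence hypotheses \eqref{con1} and \eqref{con2} are classically recognized as the precise integrality conditions that force the residual $24$th roots of unity in $M(\gamma)$ to assemble into the claimed Kronecker character, across all coset representatives of $\Gamma_0(N)$ in $SL_2(\mathbb{Z})$.
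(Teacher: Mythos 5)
The paper offers no proof of this proposition: it is quoted as a known result from Ono's monograph \cite{cmbs} (going back to Newman and Gordon--Hughes), so there is no internal argument to compare yours against. Your outline is the correct standard derivation --- conjugating each factor $\eta(\delta z)$ via $\gamma_\delta=\left(\begin{smallmatrix} a & b\delta \\ c/\delta & d\end{smallmatrix}\right)$, applying the Dedekind--Rademacher multiplier formula, and using Dedekind reciprocity together with the two congruence hypotheses to collapse the product of $24$th roots of unity into the Kronecker character --- and it defers the remaining multiplier bookkeeping to exactly the source the paper itself cites.
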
In particular, an eta-product is in $\mathcal{M}_0(\Gamma_0(N))$ if $k=0$ and $s$
is a square of rational number.
The following property which is due to Ligozat gives the analytic orders of an eta-product at the
cusps of $\Gamma_0(N)$.
\begin{prop}[\cite{cmbs}]\label{prop2.2}
Let $c,d$ and $N$ be positive integers with $d|N$ and $(c,d)=1$. If
$f(z)$ is an eta-product satisfying the conditions in Proposition
\ref{prop2.1} for $N$, then the order of vanishing of $f(z)$ at the
cusp $\frac{c}{d}$ is
\begin{equation}\label{formula}
\frac{N}{24}\sum_{\delta
|N}\frac{(d,\delta)^2r_{\delta}}{(d,\frac{N}{d})d\delta}.
\end{equation}
\end{prop}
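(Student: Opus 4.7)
The idea is to compute, for each factor $\eta(\delta z)$ with $\delta\mid N$, the leading exponent of its expansion at the cusp $c/d$, and then combine these contributions with multiplicities $r_\delta$. The only analytic input needed is the transformation law of the Dedekind eta function,
$$
\eta(\gamma z)\;=\;\varepsilon(\gamma)\,(c'z+d')^{1/2}\,\eta(z),\qquad \gamma=\begin{pmatrix} a' & b' \\ c' & d'\end{pmatrix}\in SL_2(\Z),\ c'>0,
$$
where $\varepsilon(\gamma)$ is a $24$-th root of unity. The rest is matrix bookkeeping together with the standard width formula for cusps of $\Gamma_0(N)$, namely $h_{c/d}=N/[d\,(d,N/d)]$.

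First, pick $\sigma\in SL_2(\Z)$ sending $\infty$ to $c/d$, e.g.\ $\sigma=\begin{pmatrix} c & -y \\ d & x\end{pmatrix}$ with $cx+dy=1$ (possible since $(c,d)=1$). By definition, the order of $f$ at $c/d$ in the local uniformizer equals $h_{c/d}$ times the order at $i\infty$ of $f(\sigma z)$ as a series in $q=e^{2\pi i z}$. Thus it suffices to compute the $q$-order of each $\eta(\delta\sigma z)$ at $i\infty$.

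For a fixed $\delta\mid N$, set $g=(d,\delta)$ and write $d=gd'$, $\delta=g\delta'$, so $(d',\delta')=1$. The integer matrix $\begin{pmatrix}\delta c & -\delta y\\ d & x\end{pmatrix}$ has determinant $\delta$, and since $(d',\delta')=1$ one may choose $b_\delta$ with $d'b_\delta\equiv x\pmod{\delta'}$ to obtain a Hermite-type factorization
$$
\begin{pmatrix} \delta c & -\delta y \\ d & x \end{pmatrix}\;=\;\gamma_\delta\begin{pmatrix} g & b_\delta \\ 0 & \delta/g\end{pmatrix},\qquad \gamma_\delta\in SL_2(\Z).
$$
Applying the $\eta$-transformation law to $\gamma_\delta$ yields $\eta(\delta\sigma z)=(\text{nonvanishing})\cdot\eta\!\left(\tfrac{g^2z+gb_\delta}{\delta}\right)$, whose leading behavior at $i\infty$ is $q^{g^2/(24\delta)}$. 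Weighting by $r_\delta$ and summing over $\delta\mid N$,
$$
\operatorname{ord}^{(q)}_{c/d}(f)\;=\;\frac{1}{24}\sum_{\delta\mid N}\frac{(d,\delta)^2\,r_\delta}{\delta},
$$
and multiplication by $h_{c/d}=N/[d(d,N/d)]$ recovers exactly the expression in~\eqref{formula}.

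The main obstacle is the matrix factorization step: verifying that $\gamma_\delta$ has integer entries \emph{and} determinant~$1$. Both reduce to the two coprimality facts $(d',\delta')=1$ (from $g=(d,\delta)$) and $(d,x)=1$ (from $\det\sigma=1$ with $(c,d)=1$), after which the bottom entry of $\gamma_\delta$ works out to $(gx-db_\delta)/\delta\in\Z$ and its determinant to~$1$. Once this is in hand, the order of $\eta$ at $i\infty$, the cusp-width formula for $\Gamma_0(N)$, and the final summation are routine. The special cases $d=N$ (cusp $\infty$, $h=1$, order $\tfrac1{24}\sum\delta r_\delta$) and $d=1$ (cusp $0$, $h=N$) provide a convenient sanity check on the normalization.
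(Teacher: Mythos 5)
The paper does not actually prove this proposition: it is quoted as Ligozat's theorem from the reference \cite{cmbs}, so there is no internal proof to compare yours against. Your argument is the standard derivation of that formula and, as far as I can check, it is complete and correct. The matrix $\begin{pmatrix}\delta c & -\delta y\\ d & x\end{pmatrix}$ has determinant $\delta(cx+dy)=\delta$; with $g=(d,\delta)$, $d=gd'$, $\delta=g\delta'$, the factorization forces $\gamma_\delta=\begin{pmatrix}\delta' c & -gy-cb_\delta\\ d' & (gx-db_\delta)/\delta\end{pmatrix}$, whose lower-right entry is integral exactly when $d'b_\delta\equiv x\pmod{\delta'}$ --- solvable since $(d',\delta')=1$ --- and whose determinant is then automatically $1$ by comparing determinants on both sides (a direct expansion gives $cx+dy=1$); the extra coprimality $(d,x)=1$ you invoke is true but not actually needed. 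Two small points are worth making explicit rather than implicit: the automorphy factor $(d'w+D)^{1/2}$ (with $d'>0$, so the stated form of the eta transformation law applies) grows only polynomially as $z\to i\infty$ and hence does not affect the $q$-order, and the infinite product in $\eta\bigl((g^2z+gb_\delta)/\delta\bigr)$ tends to $1$, so the leading exponent is indeed $g^2/(24\delta)$. Combining this with the width $N/(d^2,N)=N/[d\,(d,N/d)]$ and summing over $\delta\mid N$ with weights $r_\delta$ gives exactly \eqref{formula}. Your sanity checks at $d=N$ and $d=1$ agree with the table of orders of $F$ computed in Section 3 of the paper, which is further confirmation that the normalization is right.
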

Let $p$ be a prime and
$
f(q)= \sum_{n\ge n_0}^{\infty}a(n)q^n
$
be a formal power series, we define
$
U_p(f(q))=\sum_{pn\ge n_0}a(pn)q^n.
$
\noindent We write $U$ instead of $U_5$ from Section 3. If $f(z)$ is in $\mathcal{M}_0(\Gamma_0(N))$,
then $f(z)$ has an expansion at the point $i\infty$ of the form $f(z)=\sum_{n=n_0}^{\infty}a(n)q^n$
where $q=e^{2\pi iz}$ and ${\rm Im}(z)>0$. We call this expansion the Fourier series
of $f(z)$. Moreover We define $U_p(f(z))$ to be the result of applying $U_p$
to the Fourier series $f(z)$.
We use two results on the $U$-operator acting on space $\mathcal{M}_0(\Gamma_0(N))$
stated by Gordon and Hughes \cite{Gordon84}. One is if $f(z) \in \mathcal{M}_0(\Gamma_0(pN))$, where $p|N$, then
$U_p(f(z)) \in \mathcal{M}_0(\Gamma_0(N))$. The other result gives the lower bounds of orders
of $U_5(f(z))$ at the cusps of $\Gamma_0(10)$ in terms of orders of $f(z)$ at the cusps of $\Gamma_0(50)$.
In this case, $\Gamma_0(10)$ has $4$ cusps, represented by $0, \frac{1}{2}, \frac{1}{5}, \frac{1}{10}(=i\infty).$
$\Gamma_0(50)$ has $12$ cusps, represented by $0, \frac{1}{2}, \frac{1}{5}, \frac{2}{5}, \frac{3}{5},
\frac{4}{5}, \frac{1}{10}, \frac{3}{10}, \frac{7}{10}, \frac{9}{10},\frac{1}{25}, \frac{1}{50}(=i\infty).$ By Ligozat's
formula on the analytic orders of an eta-product, if $f(z)$ is an eta-product in $\mathcal{M}_0(\Gamma_0(N))$, then $f(z)$ has the same order at cusps which have the same denominators. The order of $U_5(f(z))$ at a cusp $r$ of $\Gamma_0(10)$
is denoted by $ord_r U(f)$, and the order of $f(z)$ at a cusp of $s$ of $\Gamma_0(50)$ is denoted by $ord_s f$.
\begin{prop}[\cite{Gordon84}]\label{prop2.3}
Let $f(z)$ be an eta-product in $\mathcal{M}_0(\Gamma_0(50))$, then $U_5(f(z))$ in $\mathcal{M}_0(\Gamma_0(10))$, and
\begin{eqnarray*}
&& ord_0 U(f) \geq\text{min}\,(ord_0f, ord_{\frac{1}{5}}f), \quad
ord_{\frac{1}{2}} U(f) \geq\text{min}\,(ord_{\frac{1}{2}}f, ord_{\frac{1}{10}}f),\cr
&& ord_{\frac{1}{5}} U(f) \geq\frac{1}{5}ord_{\frac{1}{25}}f, \quad\quad\quad\quad\quad
ord_{\frac{1}{10}} U(f) \geq \frac{1}{5}ord_{\frac{1}{50}}f.
\end{eqnarray*}
Moreover, $U(f)$ has no poles on $\H$ except the cusps.
\end{prop}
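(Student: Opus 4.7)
The plan is to establish the proposition via the explicit identity
\[
U_5(f)(z) = \frac{1}{5}\sum_{k=0}^{4} f\!\left(\frac{z+k}{5}\right)
\]
combined with coset decompositions of the auxiliary matrices $T_k = \begin{pmatrix} 1 & k \\ 0 & 5\end{pmatrix}$ relative to $\Gamma_0(50)$. For the $\Gamma_0(10)$-invariance, I would fix $\gamma = \begin{pmatrix} a & b \\ c & d\end{pmatrix} \in \Gamma_0(10)$ and, for each $k \in \{0,\dots,4\}$, solve the matrix identity $T_k \gamma = \gamma_k'\, T_{k'}$ for a unique $k' \in \{0,\dots,4\}$ and some $\gamma_k' \in SL_2(\mathbb{Z})$. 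A direct calculation gives $k' \equiv a^{-1}(b+kd) \pmod 5$ (well defined since $5 \mid c$ forces $\gcd(a,5)=1$) and shows that $\gamma_k'$ has lower-left entry $5c$; since $10 \mid c$, we have $50 \mid 5c$, so $\gamma_k' \in \Gamma_0(50)$. Using $f \circ \gamma_k' = f$ then permutes the five summands and yields $U_5(f)\circ\gamma = U_5(f)$. Holomorphy of $U_5(f)$ on $\H$ is immediate because each $\frac{z+k}{5}$ remains in $\H$.

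For the cusp inequalities, for each $r \in \{0, \tfrac{1}{2}, \tfrac{1}{5}, \tfrac{1}{10}\}$ I would fix a scaling matrix $\sigma_r \in SL_2(\mathbb{Z})$ with $\sigma_r(i\infty) = r$ and analyze
\[
U_5(f)(\sigma_r z) = \frac{1}{5}\sum_{k=0}^{4} f(T_k \sigma_r z).
\]
Each matrix $T_k \sigma_r$ has determinant $5$ and sends $i\infty$ to a cusp $s_k$ of $\Gamma_0(50)$; decomposing $T_k \sigma_r = \gamma_k\, \sigma_{s_k}^{(50)}\, M_k$ with $\gamma_k \in \Gamma_0(50)$, $\sigma_{s_k}^{(50)}$ a scaling matrix for $s_k$, and $M_k$ upper triangular of determinant $5$, and then absorbing $\gamma_k$ via $\Gamma_0(50)$-invariance of $f$, reduces the $k$-th summand to an $M_k$-pullback of the local expansion of $f$ at $s_k$. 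Because $f$ is an eta-product, Proposition~\ref{prop2.2} gives the same order to cusps sharing a denominator, so the combined sum is controlled by the set of denominators $\{\mathrm{denom}(s_k)\}_{k=0}^{4}$ together with the elementary divisors of the $M_k$.

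The main technical obstacle is the case analysis over $r$. When $5 \mid \mathrm{denom}(r)$ (the cusps $\tfrac{1}{5}, \tfrac{1}{10}$), all five $s_k$ share the denominator $5\,\mathrm{denom}(r)$, each $M_k$ takes the form $\begin{pmatrix} 1 & m_k \\ 0 & 5\end{pmatrix}$ with $\{m_k\}$ a complete residue system modulo $5$, and the standard phase cancellation $\sum_k e^{2\pi i n m_k/5}\in\{0,5\}$ leaves only the $5\mid n$ terms in the combined sum; what survives contributes a leading order at least $\tfrac{1}{5}\,ord_{s}f$, which is the stated bound. When $5 \nmid \mathrm{denom}(r)$ (the cusps $0, \tfrac{1}{2}$), the five $k$'s split: the unique $k$ with $a_r + k c_r \equiv 0 \pmod 5$ lands at a cusp $s_k$ of the same denominator as $r$ with $M_k = \begin{pmatrix} 5 & * \\ 0 & 1\end{pmatrix}$, while the other four land at cusps of denominator $5\,\mathrm{denom}(r)$ with $M_k = \begin{pmatrix} 1 & * \\ 0 & 5\end{pmatrix}$; in each group the ratio of cusp widths in $\Gamma_0(50)$ and $\Gamma_0(10)$ exactly absorbs the factor $5$ coming from $M_k$, so both groups contribute unscaled orders $ord_{s_k}f$, and the minimum of the two gives the stated $\min$ bound. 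Verifying that the cusp widths and elementary divisors align cleanly in both orbits of the second case, so that no stray fractional factor is left behind, is the most delicate piece of the proof; once this check is done, the absence of poles of $U_5(f)$ on $\H$ from the first step and the four cusp bounds together finish the proof.
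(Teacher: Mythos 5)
The paper does not actually prove this proposition: it is quoted verbatim from Gordon--Hughes \cite{Gordon84} (see also \cite{cmbs}), so there is no internal proof to compare against. Your outline is the standard argument behind that citation, and its skeleton is correct. The invariance step is essentially complete: with $T_k\gamma=\gamma_k'T_{k'}$ one indeed gets $k'\equiv a^{-1}(b+kd)\pmod 5$, the lower-left entry of $\gamma_k'$ is $5c\equiv 0\pmod{50}$, and $k\mapsto k'$ is a bijection mod $5$ because $ad\equiv 1\pmod 5$ (you should state this bijectivity explicitly, since it is what justifies ``permutes the five summands''). Holomorphy on $\H$ follows because an eta-product is holomorphic and nonvanishing on $\H$. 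The cusp analysis is also organized correctly: for $r\in\{0,\tfrac12\}$ exactly one translate $T_k\sigma_r$ lands at a cusp of denominator $d$ and four land at denominator $5d$, producing the $\min$ bounds, while for $r\in\{\tfrac15,\tfrac1{10}\}$ all five land at denominator $5d$ and the residue-system phase cancellation yields the factor $\tfrac15$. A spot check confirms your claim that the width ratios cancel the determinant-$5$ scaling in the first case (e.g.\ at $r=0$: the cusp $0$ has width $50$ in $\Gamma_0(50)$ versus $10$ in $\Gamma_0(10)$, matching the factor $5$ from $M_k=\left(\begin{smallmatrix}5&*\\0&1\end{smallmatrix}\right)$, while the cusps $k/5$ have width $2$, matching the factor $\tfrac15$ from $M_k=\left(\begin{smallmatrix}1&*\\0&5\end{smallmatrix}\right)$). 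What keeps this from being a finished proof is precisely the step you flag: the explicit width/order bookkeeping must be carried out at all four cusps, and in the $5\mid d$ case you must verify that the offsets $m_k$ really do run over a complete residue system modulo $5$ (otherwise the cancellation argument collapses and one only gets the weaker unaveraged bound). These verifications all succeed, so the plan is sound, but as written the four displayed inequalities are asserted from the framework rather than derived; completing that computation -- or simply citing Lemma~7 of \cite{Gordon84} as the paper does -- is what remains.
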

\section{Cubic partition modulo $5$}\label{sec3}
In this section, we give a different proof of Theorem \ref{thm1.2} which was proved in \cite{Chen and Lin 09} by using Sturm's theorem and machinery verification. Our method is similar to the proofs of the partition identities of the generating functions of $p(5n+4),\, p(7n+5)$ and $p(11n+6)$
in the Section $3$ of Chan-Lewis \cite{H-Chan}.
Define an eta-product
\begin{equation*}
F:=F(z)=\frac{\eta(25z)}{\eta(z)}\frac{\eta(50z)}{\eta(2z)},
\end{equation*}
\noindent setting $N=50$, we find $F(z)$ satisfies the conditions of Newman-Gordon-Hughes's theorem i.e. Proposition \ref{prop2.1},
so $F(z)$ is in $\mathcal{M}_0(\Gamma_0(50))$. We use Ligozat's formula (\ref{formula}) to calculate the orders of $F(z)$
at the cusps $\frac{c}{d}$, for $ d=1,\,2,\,5,\,10\,$. We give the calculation of the case of  $d=1$ as an example as follows:
\begin{eqnarray*}
ord_0F&=&\frac{50}{24\times (1, \frac{50}{1})} \sum_{\delta | 50}\frac{(1, \delta)^2}{\delta}r_{\delta}\\
&=&\frac{50}{24}\times \left(\frac{(1, 25)^2}{25}\times 1 +\frac{(1, 50)^2}{50}\times 1 +\frac{(1, 1)^2}{1}\times (-1)+\frac{(1, 2)^2}{2}\times (-1) \right)\\
&=& -3.
\end{eqnarray*} Similar calculations give
\begin{center}\label{tab1}
\begin{tabular}{|c|c|c|c|c|c|c|}
  \hline
  $d$ & 1 & 2 & 5 & 10 & 25 & 50 \\
  \hline
  $ord_{c/d}F$
 & $-3$ & $-3$ & 0 & 0 & 3 & 3 \\
  \hline
\end{tabular}
\end{center}
\noindent By Proposition \ref{prop2.3}, the orders of $U(F)$ at the cusps of $\Gamma_0(10)$ satisfy
\begin{center}\begin{tabular}{|c|c|c|c|c|}
  \hline
 $d$ & 1 & 2 &5 & 10 \\
  \hline
   $ord_{c/d}U_5(F)\geq$ &$ -3$ & $-3$ & 1 & 1 \\
  \hline
\end{tabular}
\end{center}
and $U(F)$ is holomorphic on $\H$. We note that the poles of $U(F)$ only appear at the cusps $0$ and $\frac{1}{2}$.
We define another eta-product
$$
A:=A(z)=\frac{\eta^2(5z)}{\eta^2(z)}\frac{\eta^2(10z)}{\eta^2(2z)}.
$$
By Proposition \ref{prop2.1}, we find that $A$ is in $\mathcal{M}_0(\Gamma_0(10))$. Ligozat's formula on an order of a cusp $c/d$ for an eta-product gives
\begin{center}\begin{tabular}{|c|c|c|c|c|}
  \hline
 $d$ & 1 & 2 &5 & 10 \\
  \hline
   $ord_{c/d}A$ & $-1$ & $-1$ & 1 & 1 \\
  \hline
\end{tabular}
\end{center}
and $A$ is holomorphic and non-zero elsewhere. Since the Riemann surface $(\H\cup \Q \cup{i\infty}) /\Gamma_0(10)$
has genus $0$, $\mathcal{M}_0(\Gamma_0(10))$ has one generator as a field. The orders of $A$ show that $U(F)$ is a polynomial in $A$ of degree at most $3$. Thus we can suppose that
$$
U(F)= c_0 + c_1A + C_2 A^2 +c_3 A^3, \quad c_i(i=1,2,3) \in \C
$$
Since
\begin{eqnarray*}
F(z)&=& q^3\prod_{n=1}^{\infty}\frac{(1-q^{25n})(1-q^{50n})}{(1-q^n)(1-q^{2n})}\\
&=& q^3 + q^4 +3q^5 +4q^6 +9q^7 + 128q^8 +23q^9+\cdot\cdot\cdot\\
\end{eqnarray*}
And
\begin{eqnarray*}
A&=& q\prod_{n=1}^{\infty}\frac{(1-q^{5n})^2(1-q^{10n})^2}{(1-q^n)^2(1-q^{2n})^2}\\
&=& q+2q^2+ 7q^3 + 14q^4 +35q^5 +64q^6 +\cdot\cdot\cdot\\
\end{eqnarray*}
The comparison of the Fourier coefficients of $A$ and $U(F)$ shows that
\begin{equation}\label{U(F)}
U(F)=3A +25A^2 +125 A^3.
\end{equation}
Now if $A$ is regarded as a modular function in $\mathcal{M}_0(\Gamma_0(50))$, Ligozat's formula
shows that the orders of $A$ at the cusps $c/d$ of $\Gamma_0(50)$ are:
\begin{center}\label{tab4}
\begin{tabular}{|c|c|c|c|c|c|c|}
  \hline
  $d$ & 1 & 2 & 5 & 10 & 25 & 50 \\
  \hline
  $ord_{c/d}A$
 & $-5$ & $-5$ & 1 & 1 & 1 & 1 \\
  \hline
\end{tabular}
\end{center}
Hence by Proposition \ref{prop2.3}, we obtain the following lower bounds for the order of $U(A)$ at the cusps
of $\Gamma_0(10)$:
\begin{center}\begin{tabular}{|c|c|c|c|c|}
  \hline
 $d$ & 1 & 2 &5 & 10 \\
  \hline
   $ord_{c/d}U_5(A)\geq$ & $-5$ & $-5$ & 1 & 1 \\
  \hline
\end{tabular}
\end{center}
The same reasoning gives that $U(A)$ is a polynomial in $A$ of degree at most $5$. The comparison of the
Fourier coefficients of $U(A)$ and $A^i, 1\leq i \leq 5$ shows that
\begin{equation}\label{U(A)}
U(A)=35 A + 700 A^2 + 6875 A^3 + 31250 A^4 + 78125 A^5:=35A+25R(A),
\end{equation}where $25R(A)=700 A^2 + 6875 A^3 + 31250 A^4 + 78125 A^5$.
Generally, by the orders of $F$, $A$ and Proposition \ref{prop2.3} , we have the lower bounds for the orders of $U(FA^i),\,i\geq 1$,and $U(A^i),\, i\geq1$
at the cusps $c/d$of $\Gamma_0(10)$:
\begin{center}\begin{tabular}{|c|c|c|c|c|}
  \hline
 $d$ & 1 & 2 &5 & 10 \\
  \hline
   $ord_{c/d}U_5(A^i)\geq$ & $-5i$ & $-5i$ & $i/5$ & $i/5$ \\
  \hline
\end{tabular}
\end{center}
\begin{center}\begin{tabular}{|c|c|c|c|c|}
  \hline
 $d$ & 1 & 2 &5 & 10 \\
  \hline
   $ord_{c/d}U_5(FA^i)\geq$ & $-3-5i$ & $-3-5i$ & $(3+i)/5$ & $(3+i)/5$ \\
  \hline
\end{tabular}
\end{center}
So $U(A^i)$ is a polynomial in $A$ of degree at most $5i$, $U(FA^i)$ is a polynomial in $A$ of degree
at most $3+5i$.
For $i \geq 1$, we can write
\begin{eqnarray*}
U(A^i)= \sum_{j\geq 0}a_{ij}A^j,\\
U(FA^i)=\sum_{j\geq 0}b_{ij}A^j.
\end{eqnarray*}
Where $a_{ij}$ and $b_{ij}$ are complex numbers. By considering the lower bounds for the orders of
$U(A^i)$ and $U(FA^i)$ at the cusps $\frac{1}{5}$ and $0$, we see that $a_{i0}=0, b_{i0}=0$ for all $i\geq 1$, and
$a_{ij}=0$ unless $\frac{i}{5}\leq j \leq 5i$; $b_{ij}=0$ unless $\frac{i}{5}\leq j \leq 5i+3$.

In order to obtain the information on $a_{ij}$ and $b_{ij}$, we search the recurrence of $U(A^i)$ and $U(FA^i)$.
Since $5U(A^i)= \sum_{t=0}^{4}A(\frac{z+t}{5})^i$ is a polynomial in $A$ of degree at most $5i$ for $i\geq 0$.
So the power sums $5U(A^i)$ of $A(\frac{z+t}{5})^i (0\leq t \leq 4)$ are in $\C[A]$. By Newton's identities their elementary symmetric function $\sigma_i$ are
also in $\C[A]$. We let
$$
x_i=A(\frac{z+i-1}{5})^{-1},
$$
then $x_i, 1\leq i\leq 5$ are the roots of the equation
\begin{equation}\label{equation1}
x^5 -\frac{\sigma_4}{\sigma_5}x^4 + \frac{\sigma_3}{\sigma_5}x^3 - \frac{\sigma_2}{\sigma_5}x^2 + \frac{\sigma_1}{\sigma_5}x-
\frac{1}{\sigma_5}=0.
\end{equation}
We can compute $U(A^i),\, -4\leq i \leq 0$ and $U(FA^i), \,-4\leq i \leq 0$ as follows:
$$U(A^0)=1,$$
$$U(A^{-1})= -2-5A,$$
$$U(A^{-2})= -2-125A^2,$$
$$U(A^{-3})= 46-3125A^3,$$
$$U(A^{-4})= -210-78125A^4.$$
From the relations between power sums and the elementary symmetric functions and the equation (\ref{equation1}), we get the following equations:
\begin{eqnarray*}\label{A-inverse}
5U(A^{-1})&=&\sum x_i = \frac{\sigma_4}{\sigma_5},\\
5U(A^{-2})&=& \sum x_i^2 = (\sum x_i)^2 -2\sum x_ix_j\\
&=& \frac{\sigma_4^2}{\sigma_5^2}-2\frac{\sigma_3}{\sigma_5},\\
5U(A^{-3})&=& \sum x_i^3 = (\sum x_i)^3-3(\sum x_i)(\sum x_ix_j) + 3\sum x_ix_jx_k\\
&=& \frac{\sigma_4^3}{\sigma_5^3}-3\frac{\sigma_4}{\sigma_5}\frac{\sigma_3}{\sigma_5}+\frac{\sigma_2}{\sigma_5},\\
5U(A^{-4})&=& \sum x_i^4 =(\sum x_i)^4-4(\sum x_i)(\sum x_ix_j)+2(\sum x_ix_j)^2\\
&+& (\sum x_i)(\sum x_ix_jx_k)-4\sum x_ix_jx_kx_l\\
&=& \frac{\sigma_4^4}{\sigma_5^4}-4\frac{\sigma_4}{\sigma_5}\frac{\sigma_3}{\sigma_5}+\frac{\sigma_3^2}{\sigma_5^2}+
\frac{\sigma_4}{\sigma_5}\frac{\sigma_2}{\sigma_5}-4\frac{\sigma_1}{\sigma_5}.
\end{eqnarray*}
Clearly, these and $$5U(A)=\frac{1}{x_1}+\frac{1}{x_2}+\frac{1}{x_3}+\frac{1}{x_4}+\frac{1}{x_5}$$ determine the $\sigma_i$.
The result is the following:
\begin{eqnarray*}\label{sigma}
\sigma_1 &=& 175A + 3500 A^2 + 34375 A^3 + 156250 A^4 + 390625 A^5,\\
\sigma_2 &=& -140A - 1375 A^2 - 6250 A^3 - 15625 A^4,\\
\sigma_3 &=& 55A+250A^2+625A^3,\\
\sigma_4 &=& -10A-25A^2,\\
\sigma_5 &=& A.
\end{eqnarray*}
From the Newton recurrence for power sums, we have for all $i\geq 1$,
\begin{equation}\label{recurrence}
U(A^i)=\sigma_1 U(A^{i-1})-\sigma_2 U(A^{i-2}) +\sigma_3 U(A^{i-3})-\sigma_4 U(A^{i-4})+\sigma_5 U(A^{i-5}).
\end{equation}
Note the coefficients of $\sigma_i$ above and the initial values of $U(A^i)$  are all in $\Z$,
it follows from \ref{recurrence} that for all $i\geq 1$,
\begin{equation}\label{a_{ij}}
U(A^i)=\sum_{j\geq 1}a_{ij}A^j,\quad a_{ij}\in \Z.
\end{equation}

The functions $U(FA^i)$ satisfy the same recurrence (\ref{recurrence}) as $U(A^i)$. The initial values of $U(FA^i)$ for $-4 \leq i \leq 0$ are
$$U(FA^0)=3 A + 25 A^2 + 125 A^3,$$
$$U(FA^{-1}) = A,$$
$$U(FA^{-2}) = 25A^2,$$
$$U(FA^{-3}) = -75A-625A^2-2500A^3,$$
$$U(FA^{-4}) = -7 +525A +4375A^2 +21875A^3 +15625A^4.$$
So we can deduce that $b_{ij} \in \Z,$ for $ i\geq 1, j\geq 1.$
Now we prove Theorem \ref{thm1.2}. \begin{proof}
We can write
\begin{eqnarray*}
F(z)&=&\frac{\eta(25z)}{\eta(z)}\frac{\eta(50z)}{\eta(2z)}\cr
&=& \left(\sum_{n\geq 3}^{\infty}a(n-3)q^n\right)\prod_{n=1}^{\infty}(1-q^{25n})(1-q^{50n}).
\end{eqnarray*}
Applying $U$-operator($U=U_5$ in the following) on both sides above, by (\ref{U(F)}) we have
\begin{eqnarray}\label{U-1}
U(F)=3A +25A^2 +125 A^3&=&\left(\sum_{5n\geq 3}^{\infty}a(5n-3)q^n\right)\prod_{n=1}^{\infty}(1-q^{5n})(1-q^{10n})\cr
&=& \left(\sum_{n\geq 1}^{\infty}a(5n-3)q^n\right)\prod_{n=1}^{\infty}(1-q^{5n})(1-q^{10n}).
\end{eqnarray}
Putting $$A=q\prod_{n=1}^{\infty}\frac{(1-q^{5n})^2(1-q^{10n})^2}{(1-q^n)^2(1-q^{2n})^2}$$
into (\ref{U-1}), we obtain that
\begin{eqnarray*}
\sum_{n\geq 1}^{\infty}a(5n-3)q^n&=&3q\prod_{n=1}^{\infty}\frac{(1-q^{5n})(1-q^{10n})}{(1-q^n)^2(1-q^{2n})^2}+
25q^2\prod_{n=1}^{\infty}\frac{(1-q^{5n})^3(1-q^{10n})^3}{(1-q^n)^4(1-q^{2n})^4}\cr
&+&
125q^3\prod_{n=1}^{\infty}\frac{(1-q^{5n})^5(1-q^{10n})^5}{(1-q^n)^6(1-q^{2n})^6}.
\end{eqnarray*}
Apply $U$-operator again on both sides of (\ref{U-1}), we obtain that
\begin{equation}\label{U^2F}
U(3 A + 25 A^2 + 125 A^3)=\left(\sum_{5n \geq 1}^{\infty}a(25n-3)q^n\right)\left(\prod_{n=1}^{\infty}(1-q^{n})(1-q^{2n})\right).
\end{equation}
From (\ref{U(A)}), $U(A)=35A +25R(A)$, so
$$
U(3 A + 25 A^2 + 125 A^3)=105A + 75R(A) +25 U(A^2) +125U(A^3),
$$
where $R(A),\, U(A^2),\, U(A^3)$ are in $\Z[A]$ by (\ref{a_{ij}}). We find
\begin{equation}\label{qqq}
\left(\sum_{5n\geq1}^{\infty}a(25n-3)q^n\right)\prod_{n=1}^{\infty}(1-q^{n})(1-q^{2n})\equiv 0 \pmod{5}.
\end{equation}
But
$$
\prod_{n=1}^{\infty}(1-q^{n})(1-q^{2n})\not\equiv 0 \pmod{5},
$$
so (\ref{qqq}) implies that
$$
\sum_{5n\geq 1}^{\infty}a(25n-3)q^n =\sum_{n=0}^{\infty}a(25n+22)q^n \equiv 0 \pmod{5},
$$
which is the Theorem \ref{thm1.2}.
\end{proof}
\section{ Proof of Theorem \ref{thm1.4}}\label{sec4}
In this section, we prove the Theorem \ref{thm1.4}. We first note that for $\alpha \geq 0$,
\begin{eqnarray*}
\sum_{n=0}^{\infty}a(5^{2\alpha+2}n+\delta_{\alpha})q^n&\equiv& \sum_{n=0}^{\infty}a(5^{2\alpha+2}n-(-\delta_{\alpha}))q^n \cr
&:=& \sum_{n=1}^{\infty}a(5^{2\alpha+2}n-{\delta_{\alpha}}^{\prime})q^n.
\end{eqnarray*}
Where ${\delta_{\alpha}}^{\prime}\equiv -\delta_{\alpha} \pmod{5^{2\alpha+2}}$. By the definition of $\delta_{\alpha}$, We find that
\begin{eqnarray*}
{\delta_{\alpha}}^{\prime}&=& 5^{2\alpha+2}-\delta_{\alpha}=\frac{5^{2\alpha+2 }-1}{8}\cr
&=& \frac{25(5^{2\alpha}-1)+24}{8}=25{\delta_{\alpha}}^{\prime}+3,
\end{eqnarray*}
i.e. ${\delta_{0}^{\prime}}=3$, ${\delta_{1}^{\prime}}=78\dots$. By Induction on $\alpha$, we find that
\begin{equation}\label{A}
{\delta_{\alpha}}^{\prime}={\delta_{\alpha-1}}^{\prime}+ 3\times 25^{\alpha}.
\end{equation}
Define $W_1=U(F)$, $W_2=U(W_1)$, in general, for $\alpha\geq 1$, $W_{2\alpha +1}=U(W_{2\alpha}F),
W_{2\alpha+2}=U(W_{2\alpha+1})$. As before, we find that
$$
W_2=U(U(F))=\left(\sum_{n=1}^{\infty}a(25n-{\delta_{0}^{\prime}})q^n\right)\prod_{n=1}^{\infty}(1-q^{n})(1-q^{2n}).
$$
If we suppose for $\alpha \geq 1$
$$
W_{2\alpha}=\left(\sum_{n=1}^{\infty}a(5^{2\alpha}n-{\delta_{\alpha-1}^{\prime}})q^n\right)\prod_{n=1}^{\infty}(1-q^{n})(1-q^{2n}),
$$
then
\begin{eqnarray*}
W_{2\alpha+2}&=& U(U(W_{2\alpha}F))\\
&=& U\left(U\left(\sum_{n=1}^{\infty}a(5^{2\alpha}n-{\delta_{\alpha-1}^{\prime}})q^{n+3}\prod_{n=1}^{\infty}(1-q^{25n})(1-q^{50n})\right)\right)\cr
&=& U\left(U\left(\sum_{n=1}^{\infty}a(5^{2\alpha}n-(\delta_{\alpha-1}^{\prime}+3\times 5^{2\alpha}))q^{n}\prod_{n=1}^{\infty}(1-q^{25n})(1-q^{50n})\right)\right)\cr
&=& \sum_{n=1}^{\infty}a(5^{2\alpha+2}n-{\delta_{\alpha}^{\prime}})q^n \prod_{n=1}^{\infty}(1-q^{n})(1-q^{2n}),
\end{eqnarray*} by using (\ref{A}).
So the Theorem \ref{thm1.4} is equivalent to the congruences
\begin{equation}\label{thm1.5}
W_{2\alpha+2} \equiv 0 \pmod{5^{\alpha+1}}
\end{equation}
hold for every $\alpha \geq 0$.

From Section \ref{sec3}, we know for $i \geq 1$,
\begin{eqnarray*}
U(A^i)= \sum_{j\geq 0}a_{ij}A^j, \, a_{ij}\in \Z,\quad
U(FA^i)=\sum_{j\geq 0}b_{ij}A^j,\, b_{ij} \in \Z.
\end{eqnarray*}
We write the matrices $a=(a_{ij}), b=(b_{ij}),\, i\geq1, j\geq 1$. In base of $A,\,A^2,\,A^3,\,A^4\dots$,
\begin{eqnarray*}
W_1&=&U(FA^0)=(3,25,125,0,0,0,\dots),\cr
W_2&=&U(W_1) =(3,25,125,0,0,0,\dots)a,\cr
W_3 &=& U(W_2F)=(3,25,125,0,0,0,\dots)ab.
\end{eqnarray*}
In general, for $\alpha \geq 1$,
\begin{eqnarray*}
W_{2\alpha+1}&=&(3,25,125,0,0,0,\dots)(ab)^{\alpha},\cr
W_{2\alpha+2}&=&(3,25,125,0,0,0,\dots)(ab)^{\alpha}a.
\end{eqnarray*}
Write $W_k=\left(w_1^{(k)}, w_2^{(k)}, w_3^{(k)}, \dots\right)$, then (\ref{thm1.5}) is equivalent to
\begin{equation}\label{equ4.3}
w_j^{(2\alpha+2)} \equiv 0 \pmod{5^{\alpha+1}}
\end{equation}
for $j \geq 1$ and $\alpha \geq0$.
In order to prove (\ref{equ4.3}), we need the following lemmas.
From the Newton recurrence (\ref{recurrence}) for $U(A^i)$, we know that for $i\geq 5,\,
j\geq 5$,
\begin{eqnarray*}
a_{ij}&=& 175\,a_{i-1,j-1}+3500\,a_{i-1,j-2}+34375\,a_{i-1,j-3}+156250\,a_{i-1,j-4}\\
&+& 390625\,a_{i-1,j-5} +140\,a_{i-2,j-1}+375\,a_{i-2,j-2}+6250\,a_{i-2,j-3}\\
&+& 15625\,a_{i-2,j-4}+55\,a_{i-3,j-1}+250\,a_{i-3,j-2}+625\,a_{i-3,j-3}\\
&+& 10\,a_{i-4,j-1} +25\,a_{i-4,-j-2}+ a_{i-5,j-1}.
\end{eqnarray*}
If we denote the $5$-adic order of integer $m$ by $\pi(m)$, we see that
\begin{eqnarray}\label{star}
\pi(a_{ij})&\geq& \text{min}\,(\pi(a_{i-1,j-1})+2,\, \pi(a_{i-1,j-2})+3,\, \pi(a_{i-1,j-3})+5\cr
&&\pi(a_{i-1,j-4})+7,\pi(a_{i-1,j-5})+8,\,\pi(a_{i-2,j-1})+1\cr
&&\pi(a_{i-2,j-2})+3,\,\pi(a_{i-2,j-3})+5,\,\pi(a_{i-2,j-4})+6,\cr
&&\pi(a_{i-3,j-1})+1,\,\pi(a_{i-3,j-2})+3,\,\pi(a_{i-3,j-3})+4\cr
&&\pi(a_{i-4,j-1})+1,\,\pi(a_{i-4,j-2})+2,\cr
&&\pi(a_{i-5,j-1})).
\end{eqnarray}
\begin{lem}\label{lem4.1}
For all $i,j \geq 1$, we have
\begin{equation*}
\pi(a_{ij})\geq \left[\frac{3j-i}{2} \right].
\end{equation*}
\end{lem}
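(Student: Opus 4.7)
The natural approach is strong induction on $i$, exploiting the coefficient-wise bound (\ref{star}) which was itself derived from the Newton recurrence (\ref{recurrence}). First I would dispose of the base cases $i=1,2,3,4$ by computing $U(A^i)$ explicitly (using (\ref{recurrence}) together with the initial values $U(A^0),\,U(A^{-1}),\ldots,U(A^{-4})$ listed in Section \ref{sec3}) and checking the inequality $\pi(a_{ij})\geq\lfloor(3j-i)/2\rfloor$ directly against each of the finitely many nonzero coefficients. For $i=1$ this is immediate from $U(A)=35A+700A^2+6875A^3+31250A^4+78125A^5$: the $5$-adic valuations $1,2,4,6,7$ meet the required thresholds $\lfloor(3j-1)/2\rfloor=1,2,4,5,7$. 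The remaining three base cases reduce to the same kind of routine inspection.

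For the inductive step, fix $i\geq 5$ and assume the bound for all smaller values of $i$ and all $j\geq 1$. Adopt the convention $a_{i',j'}=0$ whenever $j'\leq 0$, so that (\ref{star}) remains valid for every $j\geq 1$ (the missing terms contribute an effectively infinite $\pi$ and can be ignored). Each of the fifteen summands on the right-hand side of (\ref{star}) has the shape $\pi(a_{i-k,j-l})+c_{k,l}$, where $c_{k,l}$ is the $5$-adic valuation of the associated integer coefficient in the Newton recurrence. Applying the inductive hypothesis, what must be verified for each such pair is
\begin{equation*}
c_{k,l}+\left\lfloor\frac{3(j-l)-(i-k)}{2}\right\rfloor\;\geq\;\left\lfloor\frac{3j-i}{2}\right\rfloor.
\end{equation*}
Writing $m=3j-i$ and analysing the two parities of $3l-k$ separately, this simplifies to the single inequality $c_{k,l}\geq\lceil(3l-k)/2\rceil$, which is now independent of $i$ and $j$.

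The whole argument therefore collapses to a finite verification: for each of the fifteen index pairs $(k,l)$ appearing in (\ref{star}), confirm that the $5$-adic valuation of its coefficient meets the threshold $\lceil(3l-k)/2\rceil$. This is mechanical -- for example $\pi(175)=2\geq 1$, $\pi(34375)=5\geq 4$, $\pi(55)=1\geq 0$, $\pi(10)=1\geq 0$, and so on -- but it is the real substance of the proof, and I expect it to be the main obstacle precisely because several pairs are tight. In particular $(k,l)=(1,2)$ requires $c\geq 3$ and $\pi(3500)=3$, while $(k,l)=(2,1)$ requires $c\geq 1$ and $\pi(140)=1$; these tight cases explain why the exponent $\lfloor(3j-i)/2\rfloor$ is the sharpest one extractable from this recurrence. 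Once all fifteen pairs have been checked, the induction closes with no further work.
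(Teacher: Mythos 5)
Your proposal is correct and follows essentially the same route as the paper: explicit verification for small $i$, then induction on $i$ via the valuation bound (\ref{star}) derived from the Newton recurrence, with the fifteen term-by-term checks that the paper writes out longhand repackaged as the single condition $c_{k,l}\geq\lceil(3l-k)/2\rceil$. The only cosmetic difference is that the paper takes $1\leq i\leq 5$ as base cases and inducts from $i=6$, whereas you induct from $i=5$, which forces you to account for the constant term of $U(A^0)=1$ entering through $\sigma_5 U(A^{i-5})$ --- harmless, since $\pi(1)=0\geq\left[\frac{3\cdot 1-5}{2}\right]=-1$.
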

\begin{proof}
We give the values of $\pi(a_{ij})$ for $1\leq i \leq 5$ which show the inequality holds for $1\leq i\leq 5$.
\begin{eqnarray*}
\pi(a_{1,1})=1,\, \pi(a_{1,2})=2,\, \pi(a_{1,3})=4,\, \pi(a_{1,4})=6,\, \pi(a_{1,5})=7,\cr
\pi(a_{2,1})=0,\, \pi(a_{2,2})=2,\, \pi(a_{2,3})=4,\,  \pi(a_{2,4})=5,\,\pi(a_{2,5})=7,\pi(a_{2,6})=9,\,\cr
\pi(a_{2,7})=10,\, \pi(a_{2,8})=12,\, \pi(a_{2,9})=14,\, \pi(a_{2,10})=15,\cr
\pi(a_{3,1})=0,\,\pi(a_{3,2})=2,\,\pi(a_{3,3})=4,\,\pi(a_{3,4})=5,\,\pi(a_{3,5})=7,\,\pi(a_{3,6})=9,\cr
\pi(a_{3,7})=10,\,\pi(a_{3,8})=12,\,\pi(a_{3,9})=15,\,\pi(a_{3,10})=15,\,\pi(a_{3,31})=18,\cr
\pi(a_{3,12})=18,\,\pi(a_{3,13})=20,\,\pi(a_{3,14})=22,\,\pi(a_{3,15})=23,\cr
\pi(a_{4,1})=0,\,\pi(a_{4,2})=1,\,\pi(a_{4,3})=3,\,\pi(a_{4,4})=5,\,\pi(a_{4,5})=6,\pi(a_{4,6})=9,\,\cr
\pi(a_{4,7})=10,\,\pi(a_{4,8})=13,\,\pi(a_{4,9})=14,\,\pi(a_{4,10})=15,\pi(a_{4,11})=16,\,\cr
\pi(a_{4,12})=18,\,\pi(a_{4,13})=20,\,\pi(a_{4,14})=20,\,\pi(a_{4,15})=23,\pi(a_{4,16})=25,\,\cr
\pi(a_{4,17})=26,\,\pi(a_{4,18})=28,\,\pi(a_{4,19})=30,\,\pi(a_{4,20})=31,\pi(a_{5,1})=0,\,\cr
\pi(a_{5,2})=2,\,\pi(a_{5,3})=4,\,\pi(a_{5,4})=5,\,\pi(a_{5,5})=7,\,\pi(a_{5,6})=9,\pi(a_{5,7})=10,\,\cr
\pi(a_{5,8})=12,\,\pi(a_{5,9})=14,\,\pi(a_{5,10})=14,\,\pi(a_{5,11})=18,\,\pi(a_{5,12})=18,\cr
\pi(a_{5,13})=19,\,\pi(a_{5,14})=21,\,\pi(a_{5,15})=25,\,\pi(a_{5,16})=24,\,\pi(a_{5,17})=26,\,\cr
\pi(a_{5,18})=28,\pi(a_{5,19})=29,\,\pi(a_{5,20})=32,\,\pi(a_{5,21})=34,\,\cr
\pi(a_{5,22})=35,\,\pi(a_{5,23})=37,\,\pi(a_{5,24})=39, \pi(a_{5,25})=39.
\end{eqnarray*}
For $i\geq 6$, using induction assumption  on $i-1$, we find that
\begin{eqnarray*}
\pi(a_{i-1,j-1})+2 &\geq& \left[\frac{3(j-1)-(i-1)}{2} \right] +2=\left[ \frac{3j-i+2}{2}\right]\geq\left[\frac{3j-i}{2} \right],\cr
\pi(a_{i-1,j-2})+2 &\geq& \left[\frac{3(j-2)-(i-1)}{2} \right] +2=\left[ \frac{3j-i+1}{2}\right]\geq\left[\frac{3j-i}{2} \right],\cr
\pi(a_{i-1,j-3})+5 &\geq& \left[\frac{3(j-3)-(i-1)}{2} \right] +5=\left[ \frac{3j-i+2}{2}\right]\geq\left[\frac{3j-i}{2} \right],\cr
\pi(a_{i-1,j-4})+7 &\geq& \left[\frac{3(j-4)-(i-1)}{2} \right] +7=\left[ \frac{3j-i+3}{2}\right]\geq\left[\frac{3j-i}{2} \right],\cr
\pi(a_{i-1,j-5})+8 &\geq& \left[\frac{3(j-5)-(i-1)}{2} \right] +8=\left[ \frac{3j-i+2}{2}\right]\geq\left[\frac{3j-i}{2} \right],\cr
\pi(a_{i-2,j-1})+1 &\geq& \left[\frac{3(j-1)-(i-2)}{2} \right] +1=\left[ \frac{3j-i+1}{2}\right]\geq\left[\frac{3j-i}{2} \right],\cr
\pi(a_{i-2,j-2})+3 &\geq& \left[\frac{3(j-2)-(i-2)}{2} \right] +3=\left[ \frac{3j-i+2}{2}\right]\geq\left[\frac{3j-i}{2} \right],\cr
\pi(a_{i-2,j-3})+5 &\geq& \left[\frac{3(j-3)-(i-2)}{2} \right] +5=\left[ \frac{3j-i+3}{2}\right]\geq\left[\frac{3j-i}{2} \right],\cr
\pi(a_{i-2,j-4})+6 &\geq& \left[\frac{3(j-4)-(i-2)}{2} \right] +6=\left[ \frac{3j-i+2}{2}\right]\geq\left[\frac{3j-i}{2} \right],\cr
\pi(a_{i-3,j-1})+1 &\geq& \left[\frac{3(j-1)-(i-3)}{2} \right] +1=\left[ \frac{3j-i+2}{2}\right]\geq\left[\frac{3j-i}{2} \right],\cr
\pi(a_{i-3,j-2})+3 &\geq& \left[\frac{3(j-2)-(i-3)}{2} \right] +3=\left[ \frac{3j-i+3}{2}\right]\geq\left[\frac{3j-i}{2} \right],\cr
\pi(a_{i-3,j-3})+4 &\geq& \left[\frac{3(j-3)-(i-3)}{2} \right] +4=\left[ \frac{3j-i+2}{2}\right]\geq\left[\frac{3j-i}{2} \right],\cr
\pi(a_{i-4,j-1})+1 &\geq& \left[\frac{3(j-1)-(i-4)}{2} \right] +1=\left[ \frac{3j-i+3}{2}\right]\geq\left[\frac{3j-i}{2} \right],\cr
\pi(a_{i-4,j-2})+2 &\geq& \left[\frac{3(j-2)-(i-4)}{2} \right] +2=\left[ \frac{3j-i+2}{2}\right]\geq\left[\frac{3j-i}{2} \right],\cr
\pi(a_{i-5,j-1}) &\geq& \left[\frac{3(j-1)-(i-5)}{2} \right] =\left[ \frac{3j-i+2}{2}\right]\geq\left[\frac{3j-i}{2} \right].
\end{eqnarray*}
From (\ref{star}), we have $\pi(a_{ij})\geq$ last term of every row above, so
\begin{equation*}
\pi(a_{ij})\geq \left[\frac{3j-i}{2} \right].
\end{equation*}
\end{proof}
\begin{lem}\label{lem4.2}
For all $i,j \geq 1$, we have
\begin{equation*}
\pi(b_{ij})\geq \left[\frac{5j-i-1}{6} \right].
\end{equation*}
\end{lem}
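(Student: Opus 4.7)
The strategy follows exactly the pattern of Lemma \ref{lem4.1}. Since $U(FA^i)$ satisfies the same Newton recurrence (\ref{recurrence}) as $U(A^i)$, the coefficients $b_{ij}$ obey the same coefficient-wise recursion that produced the $5$-adic estimate (\ref{star}), now with $b$ in place of $a$. Thus the proof splits into a finite base case check for $1 \leq i \leq 5$ and an induction on $i$ for $i \geq 6$.

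For the base case, I would iterate the Newton recurrence (\ref{recurrence}) with the initial values $U(FA^i)$ given in Section \ref{sec3} for $-4 \leq i \leq 0$ and the $\sigma_k$'s already computed there, producing $U(FA^i)$ explicitly for $i=1,2,3,4,5$. A direct inspection of the coefficients then confirms $\pi(b_{ij}) \geq \left[\frac{5j-i-1}{6}\right]$ in this initial range; outside the range $1 \leq j \leq 5i+3$ the coefficient $b_{ij}$ vanishes and the bound is vacuous.

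For the inductive step, assume the estimate holds for all indices in $\{i-5, i-4, i-3, i-2, i-1\}$. Applying (\ref{star}) with $b$ in place of $a$ and substituting the inductive bounds, each of the fifteen summands should yield at least $\left[\frac{5j-i-1}{6}\right]$. For example, $\pi(b_{i-1,j-3}) + 5 \geq \left[\frac{5(j-3)-(i-1)-1}{6}\right] + 5 = \left[\frac{5j-i+15}{6}\right]$, which exceeds the target; similar easy arithmetic dispatches the other thirteen intermediate terms. The sharp term is the last one, $\pi(b_{i-5,j-1}) \geq \left[\frac{5(j-1)-(i-5)-1}{6}\right] = \left[\frac{5j-i-1}{6}\right]$, which is attained with equality and therefore dictates the shape of the bound. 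Taking the minimum over all fifteen summands closes the induction.

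The main obstacle is the base case verification: the polynomials $U(FA^i)$ for $1 \leq i \leq 5$ have up to $5i+3 = 28$ coefficients, so tabulating $\pi(b_{ij})$ is tedious and should be carried out with a computer algebra system, as the author indicates in the appendix. The inductive step itself is formally identical to the argument already given for Lemma \ref{lem4.1}, differing only in the bookkeeping of the integer shifts.
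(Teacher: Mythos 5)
Your proposal is correct and follows essentially the same route as the paper: the paper likewise notes that the $b_{ij}$ satisfy the recurrence underlying (\ref{star}), verifies the base cases $1\leq i\leq 5$ from the appendix data, and inducts on $i$ for $i\geq 6$. Your explicit check that each shift $(a,c,v)$ in the fifteen terms satisfies $a-5c+6v\geq 0$, with equality only for $\pi(b_{i-5,j-1})$, correctly fills in the arithmetic the paper leaves implicit.
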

\begin{proof}

Since $b_{ij}$ are integers satisfying the same recurrence as $a_{ij}$, so (\ref{star}) holds for $b_{ij}$.
By checking the values for $1\leq i \leq 5$, the inequality holds in these cases. The values of $b_{ij}$ can be obtained from the data in the Appendix. For $i\geq 6$, it follows by induction on $i$, using (\ref{star}) replacing $a_{ij}$ by $b_{ij}$.
\end{proof}
\begin{lem}\label{lemma4.3}
\begin{eqnarray*}
&(1)_\alpha& \quad \pi\left(w_j^{(2\alpha+1)}\right) \geq \alpha +\left[\frac{j}{2} \right],\cr
&(2)_\alpha& \quad \pi\left(w_j^{(2\alpha+2)}\right) \geq \alpha +1+\left[\frac{j-1}{2} \right].
\end{eqnarray*}
\end{lem}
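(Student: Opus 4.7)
The plan is to prove $(1)_\alpha$ and $(2)_\alpha$ simultaneously by induction on $\alpha$, following the chain $(1)_0 \Rightarrow (2)_0 \Rightarrow (1)_1 \Rightarrow (2)_1 \Rightarrow \cdots$, where each arrow invokes one of Lemmas \ref{lem4.1} and \ref{lem4.2}. The base case $(1)_0$ is immediate from the explicit formula $W_1 = U(F) = 3A + 25A^2 + 125A^3$ derived in Section \ref{sec3}: one has $w_1^{(1)} = 3$, $w_2^{(1)} = 25$, $w_3^{(1)} = 125$, and $w_j^{(1)} = 0$ for $j \geq 4$, so $\pi(w_j^{(1)}) \geq [j/2]$ holds in every case.

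For the step $(1)_\alpha \Rightarrow (2)_\alpha$, I would expand $W_{2\alpha+1} = \sum_i w_i^{(2\alpha+1)} A^i$ and apply $U$ term-by-term via $U(A^i) = \sum_j a_{ij} A^j$, which yields $w_j^{(2\alpha+2)} = \sum_{i \geq 1} w_i^{(2\alpha+1)} a_{ij}$. Combining the inductive hypothesis with Lemma \ref{lem4.1} reduces the required bound to the elementary floor inequality $\left[\frac{i}{2}\right] + \left[\frac{3j-i}{2}\right] \geq 1 + \left[\frac{j-1}{2}\right]$ for all $i, j \geq 1$. A short parity analysis shows the left-hand side equals $\left[\frac{3j}{2}\right]$ unless $j$ is even and $i$ is odd, in which case it equals $\frac{3j}{2} - 1$; both values exceed $1 + \left[\frac{j-1}{2}\right]$ for $j \geq 1$.

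For the step $(2)_\alpha \Rightarrow (1)_{\alpha+1}$, the parallel computation using $W_{2\alpha+3} = U(W_{2\alpha+2} F)$ and $U(FA^i) = \sum_j b_{ij} A^j$ gives $w_j^{(2\alpha+3)} = \sum_{i \geq 1} w_i^{(2\alpha+2)} b_{ij}$. Combining $(2)_\alpha$ with Lemma \ref{lem4.2} reduces the step to $\left[\frac{i-1}{2}\right] + \left[\frac{5j-i-1}{6}\right] \geq \left[\frac{j}{2}\right]$ for $i, j \geq 1$. The real-valued identity $\frac{i-1}{2} + \frac{5j-i-1}{6} - \frac{j}{2} = \frac{i+j-2}{3}$, together with the estimate $[x] + [y] \geq [x+y] - 1$, yields the inequality whenever $i + j \geq 5$; the six exceptional pairs $(1,1), (1,2), (1,3), (2,1), (2,2), (3,1)$ with $i + j \leq 4$ can then be verified by direct substitution.

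The main obstacle is not the induction structure itself but the bookkeeping of the two floor inequalities above: the bounds from Lemmas \ref{lem4.1} and \ref{lem4.2} are just tight enough to accommodate the $+1$ increment going from $(1)_\alpha$ to $(2)_\alpha$ and the shift from $(2)_\alpha$ to $(1)_{\alpha+1}$. The crux in both reductions is that the minimum over $i$ is controlled essentially by the $i=1$ term, leading to the specializations $\left[\frac{3j-1}{2}\right] \geq 1 + \left[\frac{j-1}{2}\right]$ and $\left[\frac{5j-2}{6}\right] \geq \left[\frac{j}{2}\right]$, each of which must be checked across residue classes modulo $2$ and $6$ respectively.
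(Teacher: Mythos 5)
Your proposal is correct and follows essentially the same route as the paper: the same base case $W_1=3A+25A^2+125A^3$, the same alternating induction $(1)_\alpha\Rightarrow(2)_\alpha\Rightarrow(1)_{\alpha+1}$ via $w_j^{(2\alpha+2)}=\sum_i w_i^{(2\alpha+1)}a_{ij}$ and $w_j^{(2\alpha+3)}=\sum_i w_i^{(2\alpha+2)}b_{ij}$, and the same reduction to elementary floor inequalities controlled by Lemmas \ref{lem4.1} and \ref{lem4.2}. The only (immaterial) difference is that you verify those inequalities uniformly in $i$ by a parity/averaging argument, whereas the paper splits into the cases $i\geq j+1$ (where $\left[\frac{i}{2}\right]$ alone suffices) and $i\leq j$ (where the lemma bound alone suffices).
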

\begin{proof}
Since $W_1=(3,25,125,0,0,0\dots)$, so $(1)_0$ holds. Suppose $(1)_\alpha$ holds for some $\alpha\geq 0$.
$W_{2\alpha+2}= W_{2\alpha+1}a$, we have
$$
w_j^{(2\alpha+2)}= \sum_{i\geq 1}w_i^{(2\alpha+1)}a_{ij}.
$$
Hence
\begin{eqnarray*}
\pi\left(w_j^{(2\alpha+2)}\right)&\geq& \text{min}_{i\geq1}\left(\pi\left(w_i^{(2\alpha+1)}\right)+\pi(a_{ij})\right)\cr
&\geq& \text{min}_{i\geq1}\left(\alpha+\left[\frac{i}{2} \right]+\pi(a_{ij})\right).
\end{eqnarray*}by assumption.
We show $(2)_\alpha$ from this by showing that
\begin{equation}\label{ss}
\alpha+\left[\frac{i}{2} \right]+\pi(a_{ij}) \geq \alpha+1 + \left[ \frac{j-1}{2}\right]\, \text{for all}\, i,j \geq 1.
\end{equation}
Clearly (\ref{ss}) holds for $i\geq j+1$. If $i\leq j$, we have by Lemma \ref{lem4.1}
\begin{eqnarray*}
\pi(a_{ij}) &\geq& \left[\frac{3j-i}{2} \right] \geq \left[\frac{2j}{2} \right]\cr
&\geq& \left[ \frac{j+1}{2} \right]= 1+ \left[\frac{j-1}{2} \right].
\end{eqnarray*}
Thus $(1)_{\alpha}$ implies $(2)_{\alpha}$. Next suppose $(2)_{\alpha}$ holds for some $\alpha \geq 0$.
Since $W_{2\alpha +3}= W_{2\alpha+2}b$, we have
\begin{eqnarray*}
w_j^{(2\alpha+3)}= \sum_{i\geq 1}w_i^{(2\alpha+2)}b_{ij}.
\end{eqnarray*}
Hence
\begin{eqnarray*}
\pi\left(w_j^{(2\alpha+3)}\right)&\geq& \text{min}_{i\geq1}\left(\pi\left(w_i^{(2\alpha+2)}\right)+\pi(b_{ij})\right)\\
&\geq& \text{min}_{i\geq1}\left(\alpha+1+\left[\frac{i-1}{2} \right]+\pi(b_{ij})\right).
\end{eqnarray*}
From this we want to show that $(1)_{\alpha +1}$. This is equivalent to show that
\begin{equation}\label{starstar}
\left[ \frac{i-1}{2} \right] + \pi(b_{ij}) \geq \left[ \frac{j}{2} \right] \,\, \text{for all}\, i, j \geq 1
\end{equation}
Clearly (\ref{starstar}) holds for $i \geq j+1$. If $i\leq j$, then by Lemma \ref{lem4.2}
\begin{eqnarray*}
\pi(b_{ij}) \geq \left[\frac{5j-i-1}{6} \right] \geq \left[\frac{4j-1}{6} \right]
\end{eqnarray*}
\begin{eqnarray*}
\geq \left[ \frac{j}{2} \right], \,\,\text{for}\, j\geq 1.
\end{eqnarray*}
This completes the proof of Lemma \ref{lemma4.3}.
\end{proof}

Finally, $(2)_{\alpha}$ implies $W_{2\alpha+2}\equiv 0 \pmod{5^{\alpha+1}}$, this proves
Theorem \ref{thm1.4}.
\section{Appendix}
 In this section we give the values of $U(A^i)$ for $2\leq i \leq 5$ and $U(FA^i$) for $1\leq i \leq 5$.
 \begin{eqnarray*}
&&U(A^2)\cr
&=&56 A + 6675 A^2 + 247500 A^3 + 4862500 A^4 + 59062500 A^5 +482421875 A^6 \cr
 &+& 2695312500 A^7 + 10253906250 A^8 + 24414062500 A^9 +30517578125 A^{10},\cr
&&U(A^3)\cr
&=&33 A + 14850 A^2 + 1510625 A^3 + 70743750 A^4 + 1974140625 A^5\cr
 &+& 37035156250 A^6+ 499980468750 A^7 + 5047851562500 A^8 \cr
  &+& 38940429687500 A^9 + 231262207031250 A^{10}+1052856445312500 A^{11} \cr
   &+&3601074218750000 A^{12} + 8869171142578125 A^{13}\cr
  &+& 14305114746093750 A^{14} + 11920928955078125 A^{15}.\cr
&&U(A^4)\cr
&=& 427246093688 A + 4196167007060 A^2 + 19073490098000 A^3\cr
      &+& 47684078034375 A^4 + 19250500000 A^5 + 663960937500 A^6\cr
      &+& 16247343750000 A^7 + 298110351562500 A^8 + 4251562500000000 A^9\cr
      &+& 48268676757812500 A^{10} + 443010864257812500 A^{11}\cr
      &+& 3316589355468750000 A^{12} + 20321464538574218750 A^{13}\cr
      &+&101693630218505859375 A^{14} + 412178039550781250000 A^{15}\cr
      &+&1330971717834472656250 A^{16} + 3325939178466796875000 A^{17}\cr
      &+&6109476089477539062500 A^{18} + 7450580596923828125000 A^{19}\cr
      &+&4656612873077392578125 A^{20}.\cr
&&U(A^5)\cr
&=& A + 74768066403450 A^2 + 2229690556656875 A^3\cr
      &+& 32711030009900000 A^4 + 286102384413984375 A^5\cr
      &+& 1645093352222656250 A^6 + 6258692726748046875 A^7\cr
      &+& 14907182816894531250 A^8 + 18763117572021484375 A^9\cr
      &+& 2473277331542968750 A^{10} + 36498332977294921875 A^{11}\cr
      &+& 446328750610351562500 A^{12} + 4574938602447509765625 A^{13}\cr
      &+& 39611951828002929687500 A^{14} + 291057825088500976562500 A^{15}\cr
      &+& 1818010449409484863281250 A^{16} + 9642277657985687255859375 A^{17}\cr
      &+& 43237060308456420898437500 A^{18} + 162589177489280700683593750 A^{19}\cr
      &+& 505987554788589477539062500 A^{20} + 1276494003832340240478515625 A^{21}\cr
      &+&2526212483644485473632812500 A^{22} + 3710738383233547210693359375 A^{23}\cr
      &+&3637978807091712951660156250 A^{24} + 1818989403545856475830078125 A^{25}\cr
&&U(FA)\cr
&=& -7 A + 440 A^2 + 13875 A^3 + 206250 A^4 + 1750000 A^5 + 9375000 A^6 \cr
 & +& 29296875 A^7 +48828125 A^8\cr
&&U(FA^2)\cr
&=& -825 A^2 + 60000 A^3 + 3796875 A^4 + 99062500 A^5 + 1572265625 A^6 \cr
      &+& 17197265625 A^7+ 135742187500 A^8 + 787353515625 A^9 + 3326416015625 A^{10}\cr
      &+& 9918212890625 A^{11} + 19073486328125 A^{12} + 19073486328125 A^{13}\cr
&&U(FA^3)\cr
 &=& -805 A^2 - 90225 A^3 + 10131250 A^4 + 896734375 A^5 + 33180859375 A^6\cr
        &+& 766660156250 A^7 + 12565576171875 A^8 + 155086669921875 A^9\cr
        &+& 1488800048828125 A^{10} + 11305999755859375 A^{11}\cr
        &+& 68371582031250000 A^{12} + 328445434570312500 A^{13}\cr
        &+& 1238346099853515625 A^{14} + 3571510314941406250 A^{15}\cr
        &+& 7510185241699218750 A^{16} + 10430812835693359375 A^{17}\cr
        &+& 7450580596923828125 A^{18}\cr
&&U(FA^4)\cr
 &=& -354 A^2 - 233600 A^3 - 10470625 A^4 + 2053506250 A^5 +
 208768671875 A^6 \cr
        &+& 9661148437500 A^7 + 287924541015625 A^8 + 6220991210937500 A^9\cr
        &+&103437377929687500 A^{10} + 1370702392578125000 A^{11}\cr
        &+&14796306610107421875 A^{12} + 131903648376464843750 A^{13}\cr
        &+& 978770828247070312500 A^{14} + 6064968109130859375000 A^{15}\cr
        &+&31349420547485351562500 A^{16} + 134410262107849121093750 A^{17}\cr
        &+& 472672283649444580078125 A^{18} + 1337751746177673339843750 A^{19}\cr
        &+& 2953223884105682373046875 A^{20} + 4819594323635101318359375 A^{21}\cr
        &+&5238689482212066650390625 A^{22} + 2910383045673370361328125 A^{23}\cr
&&U(FA^5)\cr
&=& -67 A^2 - 215775 A^3 - 52613750 A^4 - 1147121875 A^5 +
 459985703125 A^6\cr
       &+& 49379214843750 A^7 + 2650373671875000 A^8 + 94741486816406250 A^9\cr
       &+& 2506321456298828125 A^{10} + 51876175903320312500 A^{11}\cr& +&
 869309848022460937500 A^{12}
       +  12063823905944824218750 A^{13}\cr& +& 140814201641082763671875 A^{14}
       +1397377154827117919921875 A^{15}\cr& +& 11873932600021362304687500 A^{16}
       + 86767370402812957763671875 A^{17}\cr& +& 546239309012889862060546875 A^{18}
       +2960891649127006530761718750 A^{19}\cr& +& 13776480220258235931396484375 A^{20}
      +  54694735445082187652587890625 A^{21}\cr& +&183524098247289657592773437500 A^{22}
      + 512963742949068546295166015625 A^{23}\cr& +&
 1168518792837858200073242187500 A^{24}
     + 2096930984407663345336914062500 A^{25}\cr& +&
 2801243681460618972778320312500 A^{26}
     + 2501110429875552654266357421875 A^{27}\cr& +&
 1136868377216160297393798828125 A^{28}
 \end{eqnarray*}



\begin{thebibliography}{99}

\bibitem{Andrews}
G.E. Andrews, "The Theory of Partitions," Addison-Wesley Reading, MA(1976)

\bibitem{Atkin}
A.O.L. Atkin, Proof of a conjecture of Ramanujan, Glasgow Math. J. 8, 14-32(1967)

\bibitem{Chan08a}
H.-C. Chan, Ramanujan's cubic continued fraction and a
generalization of his ``most beautiful identity", Int. J. Number
Theory, to appear.


\bibitem{Chan08b}
H.-C. Chan, Ramanujan's cubic continued fraction and Ramanujan type
congruences for a ceratin partition function, Int. J. Number Theory,
to appear.

\bibitem{Chan08c}
H.-C. Chan, Distribution of a certain partition function modulo
powers of primes, preprint (2008)

\bibitem{H-Chan-95}
H.H. Chan, New proofs of Ramanujan's partition identities for moduli $5$ and $7$,
Journal of Number Theory 53, 144-158(1995)

\bibitem{H-Chan}
H.H. Chan and R.P. Lewis, Partition identities and congruences associated with the Fourier
coefficients of the Euler products, J. Computational and Applied Mathematics, 160,
69-75(2003)

\bibitem{Chen and Lin 09}
William Y.C. Chen, Bernard L.S. Lin, Congruences for the number of
cubic partitions derived from modular forms, arXiv:math.NT/0910.1263

\bibitem{Garvan}
F.G. Garvan, A simple proof of Watson's partition congruences for powers of $7$, J. Austral. Math. Soc. (seris A)
36, 316-334(1984)

\bibitem{Gordon84}
B.Gordon and K. Hughies, Ramanujan congruence for $q(n)$, Analytic Number Theory, Lecture
Notes in Math. 899, 333-359(1984)

\bibitem{Hirschhorn}
M.D. Hirschhorn and D.C. Hunt, A simple proof of the Ramanujan
conjecture for powers of 5, J. Reine Angew. Math. 326,1-17(1981)

\bibitem{Kim08}
B. Kim, A crank analog on a certain kind of partition function
arising from the cubic continued fraction, preprint(2008)

\bibitem{Koblitz}
N. Koblitz, Introduction to Elliptic Curves and Modular Forms,
Springer-Verlag(1984).


\bibitem{Lehner}
J. Lehner, Ramanujan identities involving the partition function for the moduli $11^\alpha$, Amer. J. Math.
65, 492-520(1943)

\bibitem{Lovejoy-Ono}
J. Lovejoy and K. Ono, Extension of Ramanujan's congruences for the partition function modulo powers of $5$,
J. Reine Angew. Math. 542, 123-132(2002)

\bibitem{Lovejoy}
L. Lovejoy, The number of partitions into distinct parts modulo powers of $5$,
Bull. London Math. Soc. 35(1), 41-46(2003)

\bibitem{Newman}
M. Newman, Constructions and applications of a class of modular functions $\Arrowvert$, Proc. London Math. Soc.
9, 373-381(1959)

\bibitem{cmbs}
K. Ono, The web of modularity: arithmetic of the coefficients of
modular forms and $q$-series, CBMS
Regional Conf. in Math. vol.102,  Amer. Math. Soc.(2004)

\bibitem{Ramanujan20}
S. Ramanujan, Congruence propertities of partitions, Proc. Lond.
Math. Soc. 19(2), 207-210(1919)

\bibitem{Watson38}
G.N. Watson, Beweis von Ramanujans Vermutungen \"{u}ber Zerf\"{a}llungsanzahlen,
J. Reine und Angew. Math. 179, 97-128(1938)
\end{thebibliography}


\end{document}